 \newtheorem{theorem}{Theorem}[section]
 \newtheorem{proposition}[theorem]{Proposition}
 \newtheorem{lemma}[theorem]{Lemma}
 \newtheorem{definition}[theorem]{Definition}
 \newtheorem{corollary}[theorem]{Corollary}
\theoremstyle{definition}
\newtheorem{remark}[theorem]{Remark}
\numberwithin{equation}{section}
\newcommand{\cC}{\mathcal{C}} \newcommand{\cP}{\mathcal{P}} \newcommand{\cT}{\mathcal{T}}    \newcommand{\cD}{\mathcal{D}}     \newcommand{\sH}{\mathfrak{H}} \newcommand{\sL}{\mathfrak{L}}
\begin{document} \title[On dual definite subspaces in Krein space]{On dual definite subspaces in Krein space} \author[A.~Kamuda]{A.~Kamuda}  \author[S.~Kuzhel]{S.~Kuzhel} \author[V.~Sudilovskaya]{V.~Sudilovskaya}

\address{AGH University of Science and Technology \\ 30-059 Krak\'{o}w, Poland}
\email{kamudal@agh.edu.pl}

\address{AGH University of Science and Technology \\ 30-059 Krak\'{o}w, Poland}
\email{kuzhel@agh.edu.pl}
\address{Kyiv Vocational College\\ Kiev, Ukraina}
\email{veronica.sudi@gmail.com}

\keywords{Krein space, dual definite subspaces, $\cC$-symmetry, quasi-basis, extremal extensions.}

\subjclass[2000]{Primary 47A55, 47B25; Secondary 47A57, 81Q15}
\maketitle
\begin{abstract}
Extensions of dual definite subspaces to dual maximal definite ones are described.  The concepts of 
dual quasi maximal subspaces and quasi basis are introduced and studied. The obtained results are applied to
the classification of $\cC$-symmetries.
\end{abstract}

\section{Introduction} 
Let $\mathfrak{H}$ be a Hilbert space with inner product $(\cdot,\cdot)$  linear in the first argument
and let $J$ be a non-trivial fundamental symmetry, i.e., $J=J^*$, $J^2=I$, and $J\not={\pm{I}}$.
The space $\mathfrak{H}$ endowed with the indefinite inner product
\begin{equation}\label{new1}
[f,g]=(J{f}, g)
\end{equation}
is called a Krein space $(\mathfrak{H}, [\cdot,\cdot])$.

A (closed) subspace $\mathfrak{L}$ of the
Hilbert space $\mathfrak{H}$ is called {\it nonnegative, positive, uniformly positive} with respect to the indefinite
inner product $[\cdot,\cdot]$ if, respectively,  $[f,f]\geq{0}$, \ $[f,f]>0$, \ $[f,f]\geq{\alpha}\|f\|^2, (\alpha>0)$ for all $f\in\mathfrak{L}\setminus\{0\}$.
Nonpositive, negative and uniformly negative subspaces are introduced similarly.

In each of the above mentioned classes we can define maximal subspaces.
For instance, a closed positive subspace $\mathfrak{L}$ is called \emph{maximal positive} if $\mathfrak{L}$
is not a proper subspace of a positive subspace in $\sH$. The concept of maximality for other classes is defined similarly.
A subspace $\mathfrak{L}$ of ${\mathfrak H}$ is called \emph{definite} if it is either positive or negative.
 The term \emph{uniformly definite} is defined accordingly.

Subspaces $\mathfrak{L}_\pm$  of  $\mathfrak{H}$  is called \emph{dual subspaces} if  $\mathfrak{L}_-$ is nonpositive,
$\mathfrak{L}_+$ is nonnegative,  and $\mathfrak{L}_\pm$ are orthogonal with respect to $[\cdot, \cdot]$,  that is $[f_+, f_-]=0$ for all
$f_{+}\in\mathfrak{L}_+$ and all $f_{-}\in\mathfrak{L}_-$.

The subject of the paper is  dual definite subspaces. 
Our attention is mainly focused on dual definite subspaces  $\mathfrak{L}_\pm$
with additional assumption of the density of their algebraic sum\footnote{The brackets in (\ref{e8}) indicates that
${\mathfrak L}_\pm$  are orthogonal with respect to $[\cdot, \cdot]$.} 
\begin{equation}\label{e8}
{\cD}={\mathfrak L}_+[\dot{+}]{\mathfrak L}_-
\end{equation}

At first glance, the density of $\cD$ in $\sH$ should imply the maximality of definite subspaces ${\mathfrak L}_\pm$
in the Krein space $(\sH, [\cdot,\cdot])$.
However, the results of \cite{R12}  show the existence of a densely defined sum \eqref{e8}
for which there are \emph{various extensions} to dual maximal definite subspaces  ${\mathfrak L}_\pm^{max}$: 
\begin{equation}\label{agga32b}
\cD={\mathfrak L}_+[\dot{+}]{\mathfrak L}_-  \ \rightarrow \  \cD_{max}={\mathfrak L}_+^{max}[\dot{+}]{\mathfrak L}_-^{max}.
\end{equation}

The decomposition \eqref{e8} is often appeared in the spectral theory of $\mathcal{PT}$-symmetric Hamiltonians
 \cite{AK_Bender}  as the result of closure of linear spans of positive and negative eigenfunctions
 and it is closely related to the concept of $\mathcal{C}$-symmetry in $\mathcal{PT}$-symmetric quantum mechanics (PTQM)
\cite{AK_Bender3, AK_Bender4}. The description of a symmetry $\mathcal{C}$ is one of the key points in PTQM
and it can be successfully implemented only in the case where the dual subspaces  in \eqref{e8} are \emph{maximal}.
This observation give rise to a natural question: how to describe all possible extensions of dual definite subspaces  ${\mathfrak L}_\pm$ to
 dual maximal definite subspaces  ${\mathfrak L}_\pm^{max}$?  
 
 In Section \ref{Sec2} this problem is solved with the
use of Krein's results on  non-densely defined Hermitian contractions \cite{ArTs, AK_Krein}.  The main result
(Theorem \ref{agga22b}) reduces the description of extensions \eqref{agga32b} to the solution of the operator 
equation \eqref{agga23}. 

Each pair of dual maximal definite subspaces ${\mathfrak L}_\pm^{max}$ generates an
associated Hilbert space $(\sH_G, (\cdot,\cdot)_G)$.  If ${\mathfrak L}_\pm^{max}$ are 
uniformly definite, then $\cD_{max}$  in \eqref{agga32b}
 coincides with $\sH$ and  $\sH=\sH_G$ (since the inner product $(\cdot,\cdot)_G$ is equivalent to the original 
 one $(\cdot,\cdot)$).  On the other hand, if     
${\mathfrak L}_\pm^{max}$ are only definite subspaces, then $\sH\not=\sH_G$  and the inner products
$(\cdot,\cdot)_G$,  $(\cdot, \cdot)$ are not equivalent.  In this case, the direct sum $\cD$ may lose the property
of being densely defined in the new Hilbert space $(\sH_G, (\cdot,\cdot)_G)$.   

We say that dual definite subspaces ${\mathfrak L}_\pm$  are \emph{quasi maximal}  if
there exists at least one extension \eqref{agga32b} such that  
 the set $\cD$ remains  dense in the new Hilbert space $(\sH_G, (\cdot,\cdot)_G)$ constructed by 
 $\cD_{max}$.

In Section \ref{4}, dual quasi maximal subspaces are characterized in terms of extremal extensions of symmetric
operators: Theorems \ref{agga28}, \ref{agga30}, Corollary \ref{agga31}. 
The theory of extremal extensions \cite{AK_Arlin, Arlin} allows one  to classify all possible cases:
$(A), (B), (C)$ (uniqueness/nonuniqueness 
Hilbert spaces $\sH_G$ which preserve the density of $\cD$).

Section \ref{5} deals with the operator of $\cC$-symmetry. 
Each pair of dual definite subspaces ${\mathfrak L}_\pm$ determines by \eqref{new5} an operator $\cC_0$ such that
$\cC_0^2=I$ and $J\cC_0$ is a positive symmetric operator in $\sH$. 
The operator  $\cC_0$  is called  \emph{an operator of $\cC$-symmetry}  if 
$J\cC_0$ is a self-adjoint operator in $\sH$. In this case, the notation $\cC$ is used instead of $\cC_0$.

Let $\cC_0$ be an operator associated with dual definite subspaces ${\mathfrak L}_\pm$. 
Its extension to the operator of $\cC$-symmetry $\cC$ is equivalent to the construction of 
 dual maximal definite subspaces ${\mathfrak L}_\pm^{max}$ in \eqref{agga32b}. 
This relationship allows one to use the classification $(A), (B), (C)$ in Section \ref{4}
for the solution of the following problems: (i) how many operators of $\cC$-symmetry 
can be constructed on the base of dual definite subspaces ${\mathfrak L}_\pm$? 
(ii) is it possible to define an operator of $\cC$-symmetry as the 
the extension by the continuity in the new Hilbert space  $(\sH_G,  (\cdot,\cdot)_G)$?

The concept of dual quasi maximal subspaces allows one to  introduce quasi bases in 
 Section \ref{6}. The characteristic properties of quasi bases are presented in Theorem \ref{agga38}
and Corollaries \ref{agga35} -- \ref{agga72}.   
The relevant examples are given. 

In what follows ${D}(H)$, $R(H)$ and $\ker{H}$ denote, respectively, the domain, the range,  and the kernel space of a linear operator $H$.
The symbol $H\upharpoonright_{\mathcal{D}}$ means the restriction of $H$ onto a set $\mathcal{D}$.
 Let $\sH$ be a complex Hilbert space.  Sometimes, it is useful to specify the inner product $(\cdot,\cdot)$ endowed with $\sH$. In that case the notation $(\mathfrak{H}, (\cdot,\cdot))$ will be used.

\section{Dual maximal subspaces}\label{Sec2}

\subsection{Extension of dual subspaces ${\mathfrak L}_\pm$ on to dual maximal subspaces ${\mathfrak L}_\pm^{max}$.}
 Let $(\mathfrak{H}, [\cdot,\cdot])$ be a Krein space with a fundamental symmetry $J$.
Denote
\begin{equation}\label{AK9}
\sH_+=\frac{1}{2}(I+J)\sH, \qquad \sH_-=\frac{1}{2}(I-J)\sH.
\end{equation}
The subspaces $\sH_{\pm}$  of
$\sH$ are orthogonal with respect to the initial inner product $(\cdot,\cdot)$ as well as with respect to the indefinite inner product $[\cdot,\cdot]$.
Moreover $\sH_{+}  \ (\sH_{-})$ is maximal uniformly positive (negative) with respect to $[\cdot,\cdot]$  and
\begin{equation}\label{AK10}
\sH=\sH_+[\oplus]\sH_-.
\end{equation}
The decomposition (\ref{AK10}) is called \emph{the fundamental decomposition} of the
Krein space  $(\mathfrak{H}, [\cdot,\cdot])$.

{\bf I.}  Each positive (negative) subspace ${\mathfrak L}_+$ (${\mathfrak L}_-$) in the Krein space $(\mathfrak{H}, [\cdot,\cdot])$
can be presented with respect to \eqref{AK10} as follows:
\begin{equation}\label{agga9}
\begin{array}{c}
{\mathfrak L}_+=\{x_++K_+^-x_+  :  x_+\in{M}_+\subseteq\sH_+\}, \\
  \sL_-=\{x_-+K_-^+x_-  :  x_-{\in}M_-\subseteq\sH_-\},
  \end{array}
\end{equation}
where $K_{+}^{-} : \sH_+\to\sH_-$  and  $K_{-}^{+} : \sH_-\to\sH_+$  are strong contractions\footnote{an operator $K$ is called strong contraction if $\|Kf\|<\|f\|$ for all nonzero  $f\in{D}(K)$}
 with the domains $D(K_{+}^{-})=M_+\subseteq\sH_+$  and $D(K_{-}^{+})=M_-\subseteq\sH_-$,  respectively.  Therefore,  the pair
of subspaces ${\mathfrak L}_\pm$ is uniquely determined by the
formula
\begin{equation}\label{agga20b}
{\mathfrak L}_\pm=(I+T_0)P_{\pm}D(T_0)
\end{equation}
where
\begin{equation}\label{neww4}
T_0=K_{+}^{-}P_++K_{-}^{+}P_-,  \quad D(T_0)=M_+\oplus{M_-},
\end{equation}
and $P_{+}=\frac{1}{2}(I+J)$ and $P_{-}=\frac{1}{2}(I-J)$ are orthogonal projection operators on $\sH_+$ and $\sH_-$, respectively.

By the construction,  $T_0$  is a strong contraction in $\sH$ such that 
\begin{equation}\label{fff1}
JT_0=-T_0J
\end{equation}
and its domain $D(T_0)=M_-\oplus{M_+}$ is a (closed) subspace in $\sH$.
The additional requirement of  duality of ${\mathfrak L}_\pm$ leads to
the symmetricity of $T_0$.  Precisely, the following statement holds.

\begin{lemma}\label{neww2}
The definite subspaces ${\mathfrak L}_\pm$ in \eqref{agga20b} are dual  if and only if  the operator  $T_0$ is a  symmetric strong contraction in $\sH$ 
and \eqref{fff1} holds.
\end{lemma}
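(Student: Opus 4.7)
The plan is to reduce the duality condition to a pairing identity and then recognize that identity as the symmetricity of $T_0$. Since \eqref{fff1} and the strong contraction property were already established for any definite ${\mathfrak L}_\pm$ by the construction preceding the lemma, the only non-trivial content of the ``only if'' direction is the symmetry of $T_0$.

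First I would take arbitrary $f_+ \in {\mathfrak L}_+$ and $f_- \in {\mathfrak L}_-$ and, using \eqref{agga9}, write $f_+=x_++K_+^-x_+$ and $f_-=x_-+K_-^+x_-$ with $x_\pm\in M_\pm$.  Using $Jx_+=x_+$ and $JK_+^-x_+=-K_+^-x_+$ (because $K_+^-$ maps into $\sH_-$), the indefinite inner product unfolds as
\begin{equation*}
[f_+,f_-]=(Jf_+,f_-)=(x_+-K_+^-x_+,\,x_-+K_-^+x_-).
\end{equation*}
The cross terms $(x_+,x_-)$ and $(K_+^-x_+,K_-^+x_-)$ vanish by the $(\cdot,\cdot)$-orthogonality of $\sH_+$ and $\sH_-$, so the identity collapses to
\begin{equation*}
[f_+,f_-]=(x_+,K_-^+x_-)-(K_+^-x_+,x_-).
\end{equation*}
Consequently ${\mathfrak L}_\pm$ are dual if and only if $(K_+^-x_+,x_-)=(x_+,K_-^+x_-)$ for all $x_+\in M_+$, $x_-\in M_-$.

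Next I would compare this to symmetricity of $T_0$. For $f=x_++x_-$ and $g=y_++y_-$ in $D(T_0)=M_+\oplus M_-$, direct expansion together with \eqref{neww4} and the orthogonality of $\sH_\pm$ gives
\begin{equation*}
(T_0f,g)-(f,T_0g)=\bigl[(K_+^-x_+,y_-)-(x_+,K_-^+y_-)\bigr]+\bigl[(K_-^+x_-,y_+)-(x_-,K_+^-y_+)\bigr],
\end{equation*}
and the second bracket is the complex conjugate of the first with the roles of $x,y$ swapped. Hence $T_0$ is symmetric on $D(T_0)$ exactly when the pairing identity from the previous step holds, establishing the equivalence.

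For the converse I would observe that if $T_0$ is a symmetric strong contraction with $JT_0=-T_0J$, then setting $K_+^-:=T_0\upharpoonright_{M_+}$ and $K_-^+:=T_0\upharpoonright_{M_-}$, the anticommutation \eqref{fff1} forces $K_+^-(M_+)\subseteq\sH_-$ and $K_-^+(M_-)\subseteq\sH_+$, while the strong contraction property of $T_0$ is inherited by both restrictions. Thus ${\mathfrak L}_\pm$ built from \eqref{agga20b} are definite in the sense of \eqref{agga9}, and the pairing calculation above, run in reverse, produces $[f_+,f_-]=0$. The only routine point, unlikely to be the main obstacle, is bookkeeping the $\sH_+$ vs.\ $\sH_-$ components; the essential observation is that the duality identity literally \emph{is} the symmetricity of $T_0$ once the orthogonal decomposition \eqref{AK10} is used.
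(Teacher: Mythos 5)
Your proposal is correct and follows essentially the same route as the paper: both reduce duality to the single pairing identity $(K_+^-x_+,x_-)=(x_+,K_-^+x_-)$ (the paper writes it as $(T_0x_+,x_-)=(x_+,T_0x_-)$) via the computation $[f_+,f_-]=((I-T_0)x_+,(I+T_0)x_-)$ with the cross terms killed by the orthogonality of $\sH_\pm$, and then observe that this identity is exactly the symmetricity of $T_0$ on $M_+\oplus M_-$. The only blemish is harmless: your second bracket is the \emph{negative} complex conjugate of the first with $x,y$ swapped, which does not affect the equivalence since each bracket vanishes identically precisely when the other does.
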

\begin{proof} It sufficient to establish that the symmetricity of $T_0$ is equivalent to the orthogonality of
$\sL_\pm$  with respect to the indefinite inner product $[\cdot, \cdot]$.  Indeed,  for any
$x_{\pm}\in{M}_{\pm}$,
$$
0=[(I+T_0)x_+, (I+T_0)x_-]=((I-T_0)x_+, (I+T_0)x_-)=(x_+,T_0x_-)-(T_0x_+,x_-).
$$
Hence,  $(x_+, T_0x_-)=(T_0x_+, x_-)$,  $\forall{x}_{\pm}\in{M}_{\pm}$. The last equality
is equivalent to
$$
(T_0(x_++x_-), y_++y_-)=(T_0x_+, y_-)+(T_0x_-,y_+)=(x_++x_-, T_0(y_++y_-))
$$
for all $f=x_++x_-$ and $g=y_++y_-$  from the domain of $T_0$. Therefore, $T_0$ is a symmetric operator.
\end{proof}

The operator $T_0$  characterizes the `deviation'  of subspaces  ${\mathfrak L}_\pm$  with respect to  $\sH_{\pm}$
and it allows to characterize the additional properties of ${\mathfrak L}_\pm$.
\begin{lemma}[\cite{GKS}]\label{agga1}
Let ${\mathfrak L}_\pm$  be dual definite subspaces (i.e., the operator $T_0$ satisfies the condition of Lemma \ref{neww2}). Then
\begin{itemize}
\item[(i)]  the subspaces ${\mathfrak L}_\pm$ are uniformly definite $\iff$  $\|T_0\|<1$;
\item[(ii)]  the subspaces ${\mathfrak L}_\pm$ are definite but no uniformly definite  $\iff$  $\|T_0\|=1$;
\item[(iii)] the subspaces ${\mathfrak L}_\pm$ are maximal  $\iff$  $T_0$ is a self-adjoint operator defined on $\sH$.
\end{itemize}
\end{lemma}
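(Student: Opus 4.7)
The plan is to translate each geometric condition on $\mathfrak{L}_\pm$ into an analytic condition on the angular operators $K_+^-$ and $K_-^+$ from \eqref{agga9}, and then repackage it in terms of $T_0$ via the ``diagonal'' structure \eqref{neww4}. The key preliminary computation is that of $[f,f]$ and $\|f\|^2$ on each component.

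For $f = x_+ + K_+^- x_+ \in \mathfrak{L}_+$ with $x_+ \in M_+$, using $J x_+ = x_+$, $J K_+^- x_+ = -K_+^- x_+$ and the orthogonality $\sH_+ \perp \sH_-$ one gets
\[
[f,f] = \|x_+\|^2 - \|K_+^- x_+\|^2, \qquad \|f\|^2 = \|x_+\|^2 + \|K_+^- x_+\|^2,
\]
and the symmetric calculation yields $[f,f] = \|K_-^+ x_-\|^2 - \|x_-\|^2$ on $\mathfrak{L}_-$. Since $K_+^- P_+$ and $K_-^+ P_-$ have orthogonal ranges and act on orthogonal subspaces of the domain, one has $\|T_0\| = \max\{\|K_+^-\|, \|K_-^+\|\}$. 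For (i), the inequality $[f,f] \geq \alpha \|f\|^2$ for some $\alpha>0$ on $\mathfrak{L}_+$ is equivalent after rearrangement to $\|K_+^- x_+\|^2 \leq \tfrac{1-\alpha}{1+\alpha}\|x_+\|^2$, i.e.\ $\|K_+^-\| < 1$, and the analogous statement holds on $\mathfrak{L}_-$; combining gives $\|T_0\| < 1$. Statement (ii) follows by taking the contrapositive of (i), since Lemma \ref{neww2} already ensures $\|T_0\| \leq 1$.

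For (iii), the forward direction proceeds by contradiction: if $M_+ \subsetneq \sH_+$, extending $K_+^-$ by zero on the orthogonal complement $\sH_+ \ominus M_+$ keeps it a strong contraction and produces a definite subspace properly containing $\mathfrak{L}_+$, contradicting maximality; the analogous argument handles $M_- \subsetneq \sH_-$. Hence $D(T_0) = M_+ \oplus M_- = \sH$, and combined with the symmetry of $T_0$ from Lemma \ref{neww2} and its boundedness this upgrades at once to self-adjointness (a symmetric operator defined everywhere is self-adjoint). The converse uses that any positive subspace strictly enlarging $(I+K_+^-)\sH_+$ would produce, for some $x_+\in\sH_+$, two different angular images whose difference lies in the strictly negative subspace $\sH_-$, contradicting positivity. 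I expect this last maximality argument to be the main point requiring care; the analytic content of (i)--(ii) is essentially a direct rewriting of definitions via the identities above.
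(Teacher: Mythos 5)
The paper does not actually prove this lemma---it is imported from \cite{GKS}---but your argument is correct and is exactly the angular-operator computation that the representation \eqref{agga9}--\eqref{neww4} is set up for: the identities $[f,f]=\|x_+\|^2-\|K_+^-x_+\|^2$ versus $\|f\|^2=\|x_+\|^2+\|K_+^-x_+\|^2$ (and their negatives on ${\mathfrak L}_-$), together with $\|T_0\|=\max\{\|K_+^-\|,\|K_-^+\|\}$, give (i) and (ii), while the zero-extension and angular-operator-uniqueness arguments give (iii). The only step worth writing out more fully in (iii) is that the zero-extension $\tilde K$ of $K_+^-$ to all of $\sH_+$ still produces a \emph{closed} positive subspace (as required by the paper's definition of maximality): this holds because $\|(I+\tilde K)x\|^2=\|x\|^2+\|\tilde K x\|^2\ge\|x\|^2$, so $I+\tilde K$ is bounded below on the closed set $\sH_+$ and has closed range.
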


By virtue of Lemmas \ref{neww2}, \ref{agga1},  the extension of dual definite subspaces ${\mathfrak L}_\pm$
on to dual maximal definite subspaces ${\mathfrak L}_\pm^{max}$  \emph{is equivalent to the extension of $T_0$ to
a self-adjoint strong contraction $T$ anticommuting with $J$}. In this case cf. \eqref{agga20b},
\begin{equation}\label{agga21}
{\mathfrak L}_\pm^{max}=(I+T)\sH_{\pm}.
\end{equation}

The next result is well known and it can be proved by various methods (see, e.g. ,  \cite{AK_Azizov}, \cite[Theorem 2.1]{PH}).
For the sake of completeness, principal stages of the proof based on the Phillips work  \cite{PH}  are given.

\begin{theorem}\label{agga2}
Let  ${\mathfrak L}_\pm$  be dual definite subspaces. Then there exist  dual  maximal definite subspaces  ${\mathfrak L}_\pm^{max}$ such that
${\mathfrak L}_\pm\subseteq{\mathfrak L}_\pm^{max}$.
\end{theorem}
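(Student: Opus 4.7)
The plan is to translate the assertion into an operator-extension problem and then invoke classical results of Krein and Phillips. By Lemma \ref{neww2} together with the parameterization \eqref{agga20b}--\eqref{neww4}, the dual definite subspaces $\mathfrak{L}_\pm$ correspond to a symmetric strong contraction $T_0$ on $D(T_0)=M_+\oplus M_-$ satisfying $JT_0=-T_0J$. In view of Lemma \ref{agga1}(iii) and formula \eqref{agga21}, it suffices to extend $T_0$ to a self-adjoint strong contraction $T$ on all of $\sH$ which still anti-commutes with $J$; the required subspaces will then be $\mathfrak{L}_\pm^{max}=(I+T)\sH_\pm$, and the inclusions $\mathfrak{L}_\pm\subseteq\mathfrak{L}_\pm^{max}$ are automatic from $T\supseteq T_0$.

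For the operator-extension step, Krein's theorem on Hermitian contractions (\cite{ArTs, AK_Krein}) provides a nonempty operator interval $[T_\mu,T_M]$ of self-adjoint contractive extensions of $T_0$. To secure the anti-commutation with $J$, I would use the involution $S\mapsto -JSJ$: since $J$ is unitary with $J^2=I$ and $JT_0J=-T_0$, this map sends the set of self-adjoint contractive extensions of $T_0$ to itself and reverses the operator order, so by uniqueness of the extremal elements it must interchange $T_\mu$ and $T_M$. Consequently the midpoint $T:=\tfrac{1}{2}(T_\mu+T_M)$ is a self-adjoint contractive extension of $T_0$ with $-JTJ=T$.

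The final and most delicate step is to verify that $T$ is a strong contraction, i.e.\ that $\|Tf\|<\|f\|$ for every nonzero $f\in\sH$. The anti-commutation $JT=-TJ$ forces $T$ to have anti-diagonal block form with respect to the fundamental decomposition \eqref{AK10}, and any eigenvector of $T$ at $\pm 1$ would, via this block structure and the symmetry of $T_0$, produce a vector violating the strong-contraction hypothesis on $T_0$; making this rigorous is the content of \cite[Theorem 2.1]{PH}. I expect this last step to be the main obstacle: existence of a self-adjoint contractive extension and the averaging trick that yields $J$-equivariance are essentially formal, whereas keeping the extension inside the strong-contraction class --- so that $\mathfrak{L}_\pm^{max}$ are definite rather than merely nonnegative/nonpositive --- genuinely uses the strict contractivity of $T_0$, and is precisely the point at which Phillips' explicit construction becomes necessary.
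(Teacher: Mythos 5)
Your proposal is correct and follows essentially the same route as the paper: reduce the statement via Lemmas \ref{neww2} and \ref{agga1} to extending $T_0$ to a self-adjoint strong contraction anticommuting with $J$, produce a self-adjoint contractive extension from Krein's theory, and symmetrize with respect to $J$. Two points of comparison are worth recording. First, your choice $T=\tfrac12(T_\mu+T_M)$ is precisely the content of the paper's Corollary \ref{agga3}, whereas the proof of Theorem \ref{agga2} itself is more economical: it takes an \emph{arbitrary} self-adjoint contractive extension $T'\supset T_0$ and sets $T=\tfrac12(T'-JT'J)$ as in \eqref{fifa1}, needing no information about the extremal extensions. On the other hand, your observation that $S\mapsto -JSJ$ maps the set of self-adjoint contractive extensions of $T_0$ to itself and reverses the operator order, hence interchanges $T_\mu$ and $T_M$, is a cleaner derivation of \eqref{agga4} than the paper's computation through the explicit formulas \eqref{kaa1}. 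Second, you are right to single out strong contractivity of $T$ as the delicate step (the paper dismisses it with ``it is easy to see''), but your sketched mechanism does not work as stated: an eigenvector $g$ of $T$ at $\pm1$ need not lie in $D(T_0)=M_+\oplus M_-$, so it does not directly contradict $\|T_0x\|<\|x\|$. What one can extract from your choice of $T$ is that $Tg=g$ forces $T_\mu g=T_Mg=g$ and hence $T'g=g$ for \emph{every} self-adjoint contractive extension $T'$, but ruling this out still requires an argument about the structure of the extensions on $\sH\ominus D(T_0)$; this is exactly the part supplied by \cite{PH} (or \cite{AK_Azizov}), and it also shows that for a poorly chosen $T'$ the average $\tfrac12(T'-JT'J)$ can in fact fail to be a strong contraction, so your instinct that this step is the genuine obstacle is sound.
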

\begin{proof}  
 For the construction of ${\mathfrak L}_\pm^{max}$ we should
prove the existence of a strong self-adjoint contraction $T$  which extends $T_0$ and anticommutes with $J$.
The existence of  a self-adjoint contraction extension  $T'\supset{T_0}$ is  well known \cite{AK_Krein, AK_Akhiezer}.
  However, we cannot state that  $T'$ anticommutes with $J$.
To overcome this inconvenience we modify $T'$ as follows:
\begin{equation}\label{fifa1}
T=\frac{1}{2}(T'-JT'J).
\end{equation}
It is easy to see that  $T$  is a required self-adjoint strong contraction 
because  $T$ anticommutes with $J$  and $T$  is an extension of $T_0$ (since $JT_0=-T_0J$). Therefore, the  dual  maximal subspaces ${\mathfrak L}_\pm^{max}\supseteq{\mathfrak L}_\pm$
can be defined by \eqref{agga21}.
\end{proof}

The set of all self-adjoint contractive extensions of $T_0$  forms an operator interval  $[T_\mu, T_M]$ \cite[Theorem 3]{AK_Krein}.
The end points of this interval: $T_\mu$ and $T_M$ are called the \emph{hard} and the \emph{soft} extensions of $T_0$,  respectively.
 \begin{corollary}\label{agga3}
 Let ${\mathfrak L}_\pm$ be dual definite subspaces.
  Then their extension to the dual maximal subspaces  ${\mathfrak L}_\pm^{max}$
 can be defined by \eqref{agga21} with
$$
T=\frac{1}{2}(T_\mu+T_M).
$$
 \end{corollary}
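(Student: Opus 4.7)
My plan is to reduce the corollary to the paragraph preceding Theorem~\ref{agga2}: if I show that $T:=\frac{1}{2}(T_\mu+T_M)$ is a self-adjoint strong contraction on the whole of $\sH$ that extends $T_0$ and satisfies $JT=-TJ$, then \eqref{agga21} immediately yields dual maximal definite subspaces $\mathfrak{L}_\pm^{max}$ containing $\mathfrak{L}_\pm$.

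That $T$ is a self-adjoint contraction extending $T_0$ is immediate from the Krein operator-interval structure $[T_\mu,T_M]$: both endpoints share these properties, and the interval is convex, so its midpoint inherits them.

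The crux of the proof is the anticommutation with $J$. The approach I would use is to consider the map $\Phi(T'):=-JT'J$ on the set of self-adjoint contractive extensions of $T_0$. Using $J^*=J$, $J^2=I$, the relation \eqref{fff1}, and the invariance of $D(T_0)=M_+\oplus M_-$ under $J$, one checks that $\Phi$ is a well-defined involution of $[T_\mu,T_M]$. Moreover it is order-reversing: conjugation by $J$ preserves the operator order, but the minus sign flips it. An order-reversing involution of an operator interval must interchange its endpoints, so $\Phi(T_\mu)=T_M$ and $\Phi(T_M)=T_\mu$. Applying the affine map $\Phi$ to $T$ then gives $\Phi(T)=\frac{1}{2}(T_M+T_\mu)=T$, which is precisely $JT=-TJ$.

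The step I expect to require the most care is verifying that $T$ is a \emph{strong} contraction, not merely a contraction. If $Tf=f$ for some nonzero $f$, then $(I-T_\mu)f+(I-T_M)f=0$, and since both $I-T_\mu$ and $I-T_M$ are nonnegative self-adjoint operators, pairing with $f$ forces $T_\mu f=T_M f=f$; the case $Tf=-f$ is analogous. I would then invoke the standard Krein fact that, when $T_0$ is a Hermitian strong contraction, $\ker(I-T_\mu)=\{0\}$ and $\ker(I+T_M)=\{0\}$, to conclude $f=0$. Everything else reduces to routine operator-interval manipulations, so this strong-contraction verification is the main obstacle; it is the only point that genuinely uses the fine structure of the endpoints $T_\mu,T_M$ rather than just their existence.
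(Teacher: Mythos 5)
Your proof is correct, and it reaches the paper's key identity $JT_\mu=-T_{M}J$ (relation \eqref{agga4}) by a genuinely different route. The paper proves \eqref{agga4} computationally: it first invokes Theorem \ref{agga2} to produce some self-adjoint contractive extension $T$ anticommuting with $J$, represents $T_\mu$ and $T_M$ through the Akhiezer--Glazman formulas \eqref{kaa1}, verifies $J\sqrt{I+T}=\sqrt{I-T}J$ and $JQ_1=Q_2J$, and then obtains the corollary by substituting $T'=T_\mu$ into the symmetrization \eqref{fifa1}. You replace all of this by the observation that $\Phi(T')=-JT'J$ is an affine, order-reversing involution of the operator interval $[T_\mu,T_M]$ of all self-adjoint contractive extensions of $T_0$ (well-definedness resting exactly on \eqref{fff1} and the $J$-invariance of $D(T_0)=M_+\oplus M_-$), so that it must exchange the least and greatest elements; this is shorter, avoids the square-root computations, and does not presuppose Theorem \ref{agga2}, while the paper's computation has the side benefit of producing the intertwining relations reused later. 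The endgame is the same in both arguments: the midpoint is a fixed point of $\Phi$. On the strong-contraction issue, which the paper absorbs into an unproved assertion inside the proof of Theorem \ref{agga2}, your reduction is essentially right but is stated only for eigenvectors; note that for a self-adjoint contraction $\|Tf\|=\|f\|$ forces $T^2f=f$, so $f$ splits into $(\pm 1)$-eigenvectors and your argument then applies. Moreover, since $I-T_\mu\geq I-T_M\geq 0$ and $I+T_M\geq I+T_\mu\geq 0$, one gets $\ker(I-T)=\ker(I-T_\mu)$ and $\ker(I+T)=\ker(I+T_M)$, so the two kernels you single out are exactly the right ones; in the setting of the paper their triviality is cleanest to justify via the Cayley transform and the standing density assumption on $\cD$ (the Remark following \eqref{AK71} shows every nonnegative self-adjoint extension $G\supset G_0$ is injective, and density of $D(G)$ kills $\ker(I+T)$), which is the precise substitute for the ``standard Krein fact'' you invoke.
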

 \begin{proof}
 Let us proof that
 \begin{equation}\label{agga4}
 JT_\mu=-T_{M}J.
 \end{equation}

By virtue of  \cite[2.§108 p.380]{AK_Akhiezer},  the operators $T_{\mu}$ and $T_M$ can be defined:
\begin{equation}\label{kaa1}
T_{\mu}=T-\sqrt{I+T}Q_1\sqrt{I+T}, \quad T_{M}=T+\sqrt{I-T}Q_2\sqrt{I-T}
\end{equation}
where $T$ is a self-adjoint contractive extension of $T_0$ anticommuting with $J$ (its existence was proved in Theorem \ref{agga2}),
  $Q_1$ and $Q_2$ are the orthogonal projections onto the orthogonal complements of the manifolds $\sqrt{I+T}D(T_0)$ and $\sqrt{I-T}D(T_0)$ respectively.

It is obvious that $J(I+T)=(I-T)J$ (because  $T$ anticommutes with $J$). Furthermore, since $(I\pm T)$ are self-adjoint and positive, there exists unique square roots  operators $\sqrt{I\pm T}$.
 Let us consider the operator $S=J\sqrt{I+T}J$,  and compute:
$$
S^2=J\sqrt{I+T}J^2\sqrt{I+T}J=J(I+T)J=(I-T)J^2=I-T.
$$
Hence,  $S=\sqrt{I-T}$ or
$J\sqrt{I+T}=\sqrt{I-T}J.$ The latter relation yields that the unitary operator $J$ transforms
   the decomposition  $\mathfrak{H}=\sqrt{I+T}D(T_0)\oplus{Q}_1\mathfrak{H}$  in
$\mathfrak{H}=\sqrt{I-T}D(T_0)\oplus{Q_2\mathfrak{H}}$. This means that
 $JQ_1=Q_2J$.  The	above analysis and (\ref{kaa1})  justifies (\ref{agga4}).

 Due to the proof of Theorem \ref{agga2},  for the construction of $T$ in \eqref{fifa1}
 we can use  arbitrary self-adjoint contraction $T'\supset{T_0}$.
 In particular, choosing $T'=T_{\mu}$ and using \eqref{agga4}, we complete the proof.
\end{proof}

In general, the extension of dual definite subspaces ${\mathfrak L}_\pm$ on to dual maximal definite subspaces ${\mathfrak L}_\pm^{max}$
is not determined uniquely. To describe all possible cases we use the formula \cite{AK_Arlin, AK_Krein}
\begin{equation}\label{agga22}
T=T_\mu+(T_M-T_\mu)^{\frac{1}{2}}X(T_M-T_\mu)^{\frac{1}{2}}
\end{equation}
which gives a one-to-one correspondence between all self-adjoint contractive extensions $T$ of $T_0$ and
all nonnegative self-adjoint contractions $X$ in the subspace $\mathfrak{M}=\overline{R(T_M-T_\mu)}$.
\begin{theorem}\label{agga22b}
The self-adjoint contractive extension $T\supset{T_0}$  determines dual maximal subspaces ${\mathfrak L}_\pm^{max}$ in
\eqref{agga21} if and only if  the corresponding nonnegative self-adjoint contraction $X$ describing $T$ in \eqref{agga22}
is the solution of the following operator equation in $\mathfrak{M}$:
\begin{equation}\label{agga23}
X=J(I-X)J
\end{equation}
\end{theorem}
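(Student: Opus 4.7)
The plan is to turn the requirement that $T$ yields dual maximal definite subspaces into the single algebraic constraint $JTJ=-T$, and then pull that constraint through the parametrization \eqref{agga22}. By Lemma \ref{agga1}(iii) combined with \eqref{agga21}, the subspaces $\sL_\pm^{max}=(I+T)\sH_\pm$ are dual maximal definite if and only if $T$ is a self-adjoint strong contraction satisfying $JT=-TJ$. Since \eqref{agga22} already enumerates every self-adjoint contractive extension of $T_0$, the task reduces to characterizing which parameters $X$ give $JTJ=-T$.

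The key preliminary observation is that $J$ commutes with $T_M-T_\mu$. From Corollary \ref{agga3} we have $JT_\mu=-T_M J$, which rearranges to $JT_\mu J=-T_M$; conjugating this identity by $J$ on left and right yields $JT_M J=-T_\mu$. Subtracting gives $J(T_M-T_\mu)J=T_M-T_\mu$. By uniqueness of the positive square root, $J$ also commutes with $(T_M-T_\mu)^{1/2}$, and the subspace $\mathfrak{M}=\overline{R(T_M-T_\mu)}$ is $J$-invariant, so $JXJ$ is again a well-defined bounded operator on $\mathfrak{M}$.

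With this in hand I would substitute \eqref{agga22} directly into $JTJ$:
\[
JTJ=JT_\mu J+(T_M-T_\mu)^{1/2}(JXJ)(T_M-T_\mu)^{1/2}=-T_M+(T_M-T_\mu)^{1/2}(JXJ)(T_M-T_\mu)^{1/2},
\]
and set this equal to $-T=-T_\mu-(T_M-T_\mu)^{1/2}X(T_M-T_\mu)^{1/2}$. After transposing terms, the anticommutation $JTJ=-T$ becomes the sandwich identity
\[
(T_M-T_\mu)^{1/2}\bigl(X+JXJ-I\bigr)(T_M-T_\mu)^{1/2}=0.
\]

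The remaining step is to cancel the outer factors to obtain $X+JXJ=I$ on $\mathfrak{M}$, which is precisely $X=J(I-X)J$. This is the only point that needs genuine care: $X+JXJ-I$ is a bounded self-adjoint operator on $\mathfrak{M}$, and $R((T_M-T_\mu)^{1/2})$ is dense in $\mathfrak{M}$, so pairing the displayed identity against vectors of the form $(T_M-T_\mu)^{1/2}f$, $(T_M-T_\mu)^{1/2}g$ and extending by density forces $X+JXJ-I=0$ on $\mathfrak{M}$; running the same chain in reverse yields the converse implication, producing the desired equivalence. I do not anticipate a serious obstacle: the genuine content is concentrated in Corollary \ref{agga3}, which supplies the $J$-commutation of $T_M-T_\mu$ needed for the substitution to go through cleanly.
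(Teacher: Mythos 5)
Your proposal is correct and follows essentially the same route as the paper: the paper's proof likewise deduces from \eqref{agga4} that $\mathfrak{M}$ reduces $J$ and that $J(T_M-T_\mu)^{1/2}=(T_M-T_\mu)^{1/2}J$, and then reads off the equivalence of $JTJ=-T$ with \eqref{agga23} from the parametrization \eqref{agga22}. You merely spell out the substitution and the density/cancellation step that the paper leaves implicit.
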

\begin{proof}  It follows from \eqref{agga4} that the subspace $\mathfrak{M}$ reduces $J$. Furthermore
$J(T_M-T_\mu)^{\frac{1}{2}}=(T_M-T_\mu)^{\frac{1}{2}}J$.  Taking these relations into account, we
conclude that the self-adjoint contraction $T$ in \eqref{agga22} anticommutes with $J$ if and only if
$X$ satisfies \eqref{agga23}
\end{proof}
\begin{remark}\label{agga32}
The equation \eqref{agga23} has an elementary solution $X=\frac{1}{2}I$  which corresponds to
the operator $T$ defined in Corollary \ref{agga3}.

Let $X_0\not=\frac{1}{2}I$ be a solution of \eqref{agga23}. Then the nonnegative self-adjoint contraction
$X_1=I-X_0$  is  also a solution of \eqref{agga23}.  Moreover, each self-adjoint nonnegative contraction
$X_\alpha=(1-\alpha)X_0+\alpha{X}_1, \ \alpha\in[0,1]$ is the solution of  \eqref{agga23}
Therefore, either the dual maximal definite subspaces ${\mathfrak L}_\pm^{max}\supset{\mathfrak L}_\pm$ are determined uniquely
or there are infinitely many such extensions.
\end{remark}

{\bf II.} The above results as well as the results in the sequel it is useful to rewrite with the help of the Cayley transform
of $T_0$:
\begin{equation}\label{agga7}
G_0=(I-T_0)(I+T_0)^{-1},     \qquad  T_0=(I-G_0)(I+G_0)^{-1}.
\end{equation}

In what follows \emph{we assume that the  direct sum \eqref{e8} of ${\mathfrak L}_\pm$ is a dense set in $\sH$}.
Then, 
the operator $G_0$ is a closed densely defined positive symmetric operator in $\sH$
with
$$
D(G_0)=\cD, \qquad  \ker(I+G_0^*)=\sH\ominus(M_-\oplus{M_+})
$$
and such that
\begin{equation}\label{AK71}
JG_0f=G_0^{-1}Jf, \qquad \forall{f}\in{D}(G_0)={\mathcal D}.
\end{equation}

\begin{remark}
 Every nonnegative self-adjoint extension
$G\supset{G_0}$ (i.e. $(Gf,f)\geq{0}$) is also a positive extension of $G_0$ (i.e. $(Gf,f)>{0}$ for $f\not=0$).
Indeed, if $(Gf,f)={0}$, then $Gf=0$ and $(Gf,g)=(f, G_0g)=0$ for all $g\in{D}(G_0)$. Therefore,
$f\perp{R}(G_0)=D(G_0^{-1})$ and $f=0$ since $D(G_0^{-1})$ is a dense set in $\sH$ by virtue of \eqref{AK71}.
\end{remark}

Self-adjoint positive extensions $G$ of $G_0$ are in one-to-one correspondence with the set of
contractive self-adjoint  extensions of $T_0$:
\begin{equation}\label{agga16}
T=(I-G)(I+G)^{-1},  \qquad G=(I-T)(I+T)^{-1}.
\end{equation}
In particular the Friedrichs extension $G_{\mu}$ of $G_0$ corresponds to the operator $T_\mu$, while
the Krein-von Neumann extension $G_M$ is the Cayley transform of $T_M$. The relation \eqref{agga4}
between $T_\mu$ and $T_M$ is rewritten as follows \cite[Theorem 4.3]{GKS}:
\begin{equation}\label{agga29}
JG_\mu=G^{-1}_MJ.
\end{equation}

It follows from \eqref{agga7} and Lemmas \ref{neww2},  \ref{agga1} (see also \cite[Proposition 4.2]{KS}) that
\emph{the dual maximal definite subspaces ${\mathfrak L}_\pm^{max}\supseteq{\mathfrak L}_\pm$  is in
one-to-one correspondence with positive self-adjoint extensions $G$ of $G_0$ satisfying the
additional condition} (cf. \eqref{AK71}):
\begin{equation}\label{agga14}
JGf=G^{-1}Jf, \qquad \forall{f}\in{D}(G)={\mathfrak L}_+^{max}[\dot{+}]{\mathfrak L}_-^{max}.
\end{equation}

\section{Krein spaces associated with dual maximal subspaces}
\subsection{The case of maximal uniformly definite subspaces.}\label{ref2}
Let ${\mathfrak L}_\pm^{max}$  be dual maximal uniformly definite subspaces.
Then:
\begin{equation}\label{e8b}
{\sH}={\mathfrak L}_+^{max}[\dot{+}]{\mathfrak L}_-^{max}.
\end{equation}

Relation \eqref{e8b} illustrates the variety of possible decompositions of the Krein space $(\sH, [\cdot,\cdot])$ onto its
maximal uniformly positive/negative subspaces. This property is characteristic for a Krein space and, sometimes,  it is used
for its definition \cite{AK_Azizov}.

 With decomposition \eqref{e8b}  one can associate a new inner product in $\sH$:
\begin{equation}\label{agga15}
(f, g)_G=[f_+, g_+]-[f_-, g_-], \qquad  f,g\in\sH
\end{equation}
($ f=f_++f_-, \ g=g_++g_-, \  f_\pm, \ g_\pm \in {\mathfrak L}_\pm^{max}$). By virtue of \eqref{agga21},
the relations $f_\pm=(I+T)x_\pm$, \ $g_\pm=(I+T)y_\pm$, \ $x_\pm, y_\pm \in \sH_\pm$  hold.
Taking \eqref{agga16} into account we rewrite \eqref{agga15} as follows:
\begin{equation}\label{fff7}
\begin{array}{l}
(f, g)_G=((I-T)x_+, (I+T)y_+)+((I-T)x_-, (I+T)y_-)= \\
((I-T)(x_++x_-), (I+T)(y_++y_-))=(Gf,g).
\end{array}
\end{equation}
Here $G$ is a bounded\footnote{`bounded'  since $\|T\|<1$ see Lemma \ref{agga1}} positive self-adjoint operator with $0\in\rho(G)$.
Therefore, the  subspaces ${\mathfrak L}_\pm^{max}$ determine the
new inner product
$$
(\cdot,\cdot)_1=(G\cdot, \cdot)=(\cdot,\cdot)_G,
$$
which is equivalent to the initial one $(\cdot,\cdot)$. 
The subspaces ${\mathfrak L}_\pm^{max}$ are mutually orthogonal with respect to $(\cdot,\cdot)_{G}$
in the Hilbert space $(\sH, (\cdot,\cdot)_G)$.

Summing up:  \emph{the choice of various dual maximal uniformly definite subspaces  ${\mathfrak L}_\pm^{max}$
generates infinitely many equivalent inner products $(\cdot,\cdot)_{G}$ of the Hilbert space $\sH$ but it
does not change the initial Krein space $(\sH, [\cdot,\cdot])$.}

\subsection{The case of maximal definite subspaces.}\label{ref1}
Assume that  ${\mathfrak L}_\pm^{max}$  are dual maximal definite subspaces but they are not uniformly definite.
Then the direct sum
\begin{equation}\label{agga18}
\cD_{max}={\mathfrak L}_+^{max}[\dot{+}]{\mathfrak L}_-^{max}
\end{equation}
is a dense set in the Hilbert space $(\sH, (\cdot, \cdot))$. The corresponding positive self-adjoint operator $G$ is unbounded.

Similarly to the previous case, with each of direct sum \eqref{agga18},
one can associate a new inner product $(\cdot,\cdot)_G=(G\cdot,\cdot)$ defined on $D(G)=\cD_{max}$ by the formula
\eqref{agga15}. The inner product $(\cdot,\cdot)_G$ is not equivalent to the initial one and
the linear space $\cD_{max}$ endowed with $(\cdot,\cdot)_{G}$ is a pre-Hilbert space.

Let $\sH_{G}$ be the completion of $\cD_{max}$ with respect to $(\cdot,\cdot)_{G}$.  The Hilbert space $\sH_{G}$ does not coincide with $\sH$.
The dual subspaces ${\mathfrak L}_\pm^{max}$ are orthogonal with respect to $(\cdot,\cdot)_{G}$ and, by construction,
the new Hilbert space $(\sH_{G}, (\cdot,\cdot)_{G})$ can be decomposed as follows:
\begin{equation}\label{e8c}
\sH_{G}=\hat{\sL}_+^{max}\oplus_{G}\hat{\sL}_-^{max},
\end{equation}
 where $\hat{\sL}_{\pm}^{max}$ are the completion of $\sL_\pm^{max}$ with respect to $(\cdot,\cdot)_{G}$.

The decomposition (\ref{e8c}) can be considered as the fundamental decomposition of the new
Krein space $(\sH_{G}, [\cdot, \cdot]_G)$ with the indefinite inner product
\begin{equation}\label{agga19}
[f, g]_G=({J_G}f, g)_G=(f_{+}, g_{+})_{G}-(f_{-}, g_{-})_{G},
\end{equation}
where $f=f_{+}+f_{-}, \ g=g_{+}+g_{-}, \ f_{\pm}, g_{\pm} \in \hat{\sL}_{\pm}^{max}$ and
${J_G}f=f_{+}-f_{-}$  is the fundamental symmetry in $\sH_G$.

Let $\mathfrak{D}[G]$ be the \emph{energetic linear manifold} constructed by the positive self-adjoint operator $G$. In other words,
$\mathfrak{D}[G]$ denotes the completion of $D(G)=\cD_{max}$ with respect to the  energetic norm
$$
\|f\|^2_{en}=\|f\|^2+\|f\|^2_G=\|f\|^2+(Gf, f).
$$
The set of elements $\mathfrak{D}[G]$ coincides with $D(\sqrt{G})$ and the 
energetic linear manifold is a Hilbert space $(\mathfrak{D}[G], (\cdot,\cdot)_{en})$
with respect to the energetic inner product
$$
(f, g)_{en}=(f,g)+(\sqrt{G}f,\sqrt{G}g), \qquad f,g\in{D(\sqrt{G})}=\mathfrak{D}[G]. 
$$

Comparing the definitions of $\sH_G$ and $\mathfrak{D}[G]$ leads to the conclusion that  the energetic linear manifold  $\mathfrak{D}[G]$ coincides
with the common part of $\sH$ and $\sH_G$,  i.e.,   $\mathfrak{D}[G]=\sH\cap\sH_G$.

\begin{lemma}\label{agga20}
The indefinite inner products $[\cdot, \cdot]$  and $[\cdot, \cdot]_G$ coincide on $\mathfrak{D}[G]$.
\end{lemma}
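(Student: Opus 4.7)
The approach is a density argument: verify the equality first on the dense subset $\cD_{max} = D(G) \subset \mathfrak{D}[G]$, where both sides can be computed directly from the decomposition $f = f_+ + f_-$ with $f_\pm \in \sL_\pm^{max}$, and then extend to all of $\mathfrak{D}[G]$ by continuity.

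\textbf{Step 1 (Equality on $\cD_{max}$).} Take $f, g \in \cD_{max}$ with decompositions $f=f_++f_-$, $g=g_++g_-$, $f_\pm,g_\pm\in\sL_\pm^{max}$. Since $\sL_+^{max}$ and $\sL_-^{max}$ are $[\cdot,\cdot]$-orthogonal by duality, the cross terms vanish and
\[
[f,g]=[f_+,g_+]+[f_-,g_-].
\]
Applying \eqref{agga15} to the one-sided elements $f_+,g_+\in\sL_+^{max}$ (so the $\sL_-^{max}$-components are zero) gives $(f_+,g_+)_G=[f_+,g_+]$, and similarly $(f_-,g_-)_G=-[f_-,g_-]$. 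Substituting into \eqref{agga19},
\[
[f,g]_G = (f_+,g_+)_G-(f_-,g_-)_G = [f_+,g_+]+[f_-,g_-]=[f,g].
\]

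\textbf{Step 2 (Density extension).} The set $\cD_{max}=D(G)$ is dense in $\mathfrak{D}[G]=D(\sqrt{G})$ with respect to the energetic norm $\|\cdot\|_{en}$. Since $\|h\|_G^2=(Gh,h)=\|\sqrt{G}h\|^2\le\|h\|_{en}^2$ and $\|h\|^2\le\|h\|_{en}^2$, any $\|\cdot\|_{en}$-Cauchy sequence $f_n\in\cD_{max}$ is Cauchy in $(\sH,(\cdot,\cdot))$ and in $(\sH_G,(\cdot,\cdot)_G)$ simultaneously. Given $f\in\mathfrak{D}[G]=\sH\cap\sH_G$, choose $f_n\to f$ in $\|\cdot\|_{en}$; then $f_n\to f$ in the $\sH$-norm and the $\sH_G$-norm (the two limits coincide via the natural identification $\mathfrak{D}[G]=\sH\cap\sH_G$). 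Choose $g_n\to g$ analogously. Since $[\cdot,\cdot]=(J\cdot,\cdot)$ is continuous on $\sH\times\sH$ and $[\cdot,\cdot]_G=(J_G\cdot,\cdot)_G$ is continuous on $\sH_G\times\sH_G$, passing to the limit in the identity $[f_n,g_n]=[f_n,g_n]_G$ established in Step~1 yields $[f,g]=[f,g]_G$.

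\textbf{Main obstacle.} The essential technical point is the identification $\mathfrak{D}[G]=\sH\cap\sH_G$ (already noted in the text) and the resulting compatibility of limits: a single $\|\cdot\|_{en}$-approximation in $\cD_{max}$ must represent the \emph{same} abstract element when viewed inside $\sH$ and inside the completion $\sH_G$. Once this identification is invoked, the rest is a routine continuity argument; the algebraic identity on $\cD_{max}$ is a direct consequence of the definitions \eqref{agga15} and \eqml{agga19} together with duality.
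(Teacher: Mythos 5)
Your proof is correct and follows essentially the same route as the paper: the identity $[f,g]_G=[f_+,g_+]+[f_-,g_-]=[f,g]$ is verified on $D(G)$ directly from \eqref{agga15} and \eqref{agga19}, and then extended to $\mathfrak{D}[G]$ by continuity, using that $[\cdot,\cdot]$ and $[\cdot,\cdot]_G$ are each dominated by the corresponding norms. Your Step 2 merely spells out the density/compatibility details that the paper compresses into one sentence.
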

\begin{proof}
Indeed, taking \eqref{agga15} and \eqref{agga19} into account,
$$
[f,g]_G=[f_+,g_+]+[f_-,g_-]=[f_++f_-, g_++g_-]=[f, g], \quad \forall f,g\in{D(G)}.
$$
The obtained relation can be extended onto $\mathfrak{D}[G]$ by the continuity because $|[f,g]|\leq(f,g)$ and
 $|[f,g]|\leq(f,g)_G$.
\end{proof}

Summing up:  \emph{the choice of various dual maximal definite subspaces  ${\mathfrak L}_\pm^{max}$
generates infinitely many Krein spaces $(\sH_G, [\cdot,\cdot]_G)$  such that
the indefinite inner product  $[\cdot,\cdot]_G$ coincide with the original indefinite inner product $[\cdot,\cdot]$
on the energetic linear manifold $\mathfrak{D}[G]$.  The inner products $(\cdot,\cdot)$ and  $(\cdot,\cdot)_{G}$ restricted
on $\mathfrak{D}[G]$ are not equivalent.}

\section{Dual quasi maximal subspaces}\label{4}
\subsection{Definition and principal results.}\label{4.1}
 Let  ${\mathfrak L}_\pm$  be dual definite subspaces such that their direct sum \eqref{e8} is a dense set in 
 $\sH$ and let $G_0$ be the corresponding symmetric operator.
 Each positive self-adjoint extension $G$ of $G_0$ with additional condition \eqref{agga14} determines
 the Hilbert space\footnote{The Hilbert space $\sH_G$ coincides with $\sH$ if $G$ is a bounded operator} $(\sH_G, (\cdot,\cdot)_G)$.

 \begin{definition}\label{agga25}
The dual definite subspaces ${\mathfrak L}_\pm$ are called quasi maximal if
there exists positive self-adjoint extension $G\supset{G_0}$ with the condition \eqref{agga14}  and such that
 the domain $D(G_0)$ remains  dense in the new Hilbert space $(\sH_G, (\cdot,\cdot)_G)$.
\end{definition}

Obviously, each maximal definite subspaces are quasi maximal. 
For dual uniformly definite subspaces, the concept of  quasi-maximality is equivalent to maximality, i.e., 
 each quasi maximal uniformly definite subspaces have to be maximal uniformly definite. 

In general case of definite subspaces, the closure of dual quasi maximal subspaces
${\mathfrak L}_\pm$ with respect to $(\cdot,\cdot)_G$  coincides with subspaces $\hat{\sL}_\pm^{max}$
in the fundamental decomposition \eqref{e8c}, i.e., the closure of ${\mathfrak L}_\pm$ in $(\sH_G, (\cdot,\cdot)_G)$
gives dual maximal uniformly definite subspaces $\hat{\sL}_\pm^{max}$ of the new Krein space
$(\sH_G, [\cdot,\cdot]_G)$.

It is natural to suppose that the quasi maximality
can be characterized in terms of the corresponding
 positive self-adjoint extensions $G$ of $G_0$. For this reason, we recall \cite{Arlin, AK_Arlin}
 that a nonnegative self-adjoint extension  $G$ of
$G_0$ is called \emph{an extremal extension} if
\begin{equation}\label{bebe86}
\inf_{f\in{D}(G_0)}{(G(\phi-f),(\phi-f))}=0, \quad \mbox{for all} \quad \phi\in{D}(G).
\end{equation}
The Friedrichs extension $G_\mu$  and  the Krein-von Neumann extension $G_M$  are examples of extremal extensions of $G_0$.

\begin{theorem}\label{agga28}
Dual definite subspaces ${\mathfrak L}_\pm$ are quasi maximal if and only if
there exists an extremal extension $G$ of $G_0$ which satisfies \eqref{agga14}
\end{theorem}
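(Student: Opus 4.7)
The plan is to identify the density condition in Definition \ref{agga25} with the extremality condition \eqref{bebe86} almost verbatim, once one unpacks the construction of $\sH_G$ from Subsection \ref{ref1}. Throughout, I assume $G$ is a positive self-adjoint extension of $G_0$ satisfying \eqref{agga14}, so that by the discussion following \eqref{agga14} it corresponds to a dual maximal definite pair ${\mathfrak L}_\pm^{max}\supseteq{\mathfrak L}_\pm$; the Hilbert space $(\sH_G,(\cdot,\cdot)_G)$ is then the completion of $D(G)=\cD_{max}$ with respect to the inner product $(f,g)_G=(Gf,g)$.

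The key observation I would establish first is that $D(G)$ is, by construction, a dense subset of $\sH_G$ with respect to $(\cdot,\cdot)_G$. Consequently, $D(G_0)\subseteq D(G)$ is dense in $\sH_G$ if and only if every element $\phi\in D(G)$ is a $(\cdot,\cdot)_G$-limit of elements of $D(G_0)$. Since for $\phi\in D(G)$ and $f\in D(G_0)\subseteq D(G)$ the difference $\phi-f$ lies in $D(G)$ and
\[
\|\phi-f\|_G^2=(\phi-f,\phi-f)_G=(G(\phi-f),\phi-f),
\]
this $(\cdot,\cdot)_G$-density is equivalent to
\[
\inf_{f\in D(G_0)}(G(\phi-f),\phi-f)=0\qquad\text{for all } \phi\in D(G),
\]
which is precisely the definition of extremality \eqref{bebe86}.

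From here, both implications of the theorem fall out. If ${\mathfrak L}_\pm$ is quasi maximal, then by Definition \ref{agga25} there is a positive self-adjoint extension $G\supset G_0$ satisfying \eqref{agga14} such that $D(G_0)$ is dense in $\sH_G$; by the above equivalence, this $G$ is extremal. Conversely, if $G$ is an extremal extension of $G_0$ satisfying \eqref{agga14}, then the same equivalence yields that $D(G_0)$ is dense in $\sH_G$, so ${\mathfrak L}_\pm$ is quasi maximal.

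I do not expect any serious obstacle; the real content of the theorem is not in the manipulations above but in having already identified, in Subsection \ref{ref1}, the Hilbert space $\sH_G$ as the $(\cdot,\cdot)_G$-completion of $D(G)$ and, in Section \ref{Sec2}, the positive self-adjoint extensions of $G_0$ fulfilling \eqref{agga14} as those in bijection with dual maximal definite subspaces. The only subtle point worth stating carefully is that passing from density in $\sH_G$ to density in $D(G)$ (equipped with $\|\cdot\|_G$) uses nothing more than transitivity of density, but it must be invoked in order to reduce the quantifier in Definition \ref{agga25} to a quantifier over $\phi\in D(G)$, matching exactly the quantifier in \eqref{bebe86}.
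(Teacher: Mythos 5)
Your proposal is correct and follows essentially the same route as the paper: both identify $\|\phi-f\|_G^2=(G(\phi-f),\phi-f)$ and read the extremality condition \eqref{bebe86} as exactly the statement that $D(G_0)$ is $\|\cdot\|_G$-dense in $D(G)$, hence (by transitivity, since $D(G)$ is dense in $\sH_G$ by construction) in $\sH_G$. The paper compresses this into a single sentence; your version merely spells out the quantifier bookkeeping more explicitly.
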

\begin{proof}
Since $\|\phi-f\|^2_{G}=(G(\phi-f),(\phi-f))$, the condition (\ref{bebe86})
means that each element $\phi\in{D}(G)={\mathfrak L}_+^{max}[\dot{+}]{\mathfrak L}_-^{max}$ can be approximated by elements
$f\in{D}(G_0)={\mathfrak L}_+[\dot{+}]{\mathfrak L}_-$  in the Hilbert space $(\sH_{G}, (\cdot,\cdot)_G)$ 
if and only if $G$ is an extremal extension of $G_0$.
\end{proof}

\begin{remark}\label{new61}
In general, an extremal extension $G$ of $G_0$ is not determined uniquely.
Let $G_i,$ $i=1,2$ be extremal extensions of $G_0$ that satisfy \eqref{agga14}.

By virtue of \eqref{agga15} and Lemma \ref{agga20}, the operator
$$
W : (\sH_{G_1}, (\cdot,\cdot)_{G_1}) \to  (\sH_{G_2}, (\cdot,\cdot)_{G_2})
$$
defined as $Wf=f$ for $f\in{D}(G_0)$ and extended by continuity onto
$(\sH_{G_1}, (\cdot,\cdot)_{G_1})$ is a unitary mapping between $\sH_{G_1}$ and $\sH_{G_2}$.
Moreover, $WJ_{G_1}=J_{G_2}W$, where
$J_{G_i}$ are the fundamental symmetry operators corresponding to the fundamental decompositions
 (cf. \eqref{e8c})
$\sH_{G_i}=(\hat{\sL}_{+}^{i})^{max}\oplus_{G_{i}}(\hat{\sL}_{-}^{i})^{max}$
of the Krein spaces $(\sH_{G_i}, [\cdot,\cdot]_{G_i})$. Therefore, the indefinite inner products  of these spaces
satisfy the relation
$$
[f,g]_{G_1}=[Wf,Wg]_{G_2}, \qquad f,g\in\sH_{G_1}
$$
and they
are extensions (by the continuity) of the original indefinite inner product $[\cdot,\cdot]$  defined on $D(G_0)$.
For these reasons, the Krein spaces $(\sH_{G_i}, [\cdot,\cdot]_{G_i})$ corresponding to different 
extremal extensions are unitary equivalent and we can identify them.
\end{remark}

Sufficient conditions of quasi maximality are presented below. 

\begin{proposition}\label{agga8}
Dual definite subspaces ${\mathfrak L}_\pm$ are quasi maximal if one of the following (equivalent) conditions is satisfied:
\begin{itemize}
\item[(i)] the operator $G_0$ has a unique nonnegative self-adjoint extension;
\item[(ii)]  dual maximal subspaces ${\mathfrak L}_\pm^{max}\supseteq{\mathfrak L}_\pm$ are determined
by the Friedrichs extension $G_\mu$  of $G_0$
(i.e.,  ${\mathfrak L}_\pm^{max}=(I+T_\mu)\sH_{\pm}$);
\item[(iii)] for all nonzero vectors $g\in\ker(I+G_0^*)=\sH\ominus(M_-\oplus{M_+})$
$$
\inf_{f\in{D}(G_0)}\frac{(G_0f,f)}{|(f,g)|^2}=0;
$$
\item[(iv)] for all nonzero vectors $g\in\sH\ominus{D(T_0)}=\sH\ominus(M_-\oplus{M_+})$
\begin{equation}\label{agga6}
\sup_{x\in{D}(T_0)}\frac{|(T_0x, g)|^2}{\|x\|^2-\|T_0x\|^2}=\infty.
\end{equation}
\end{itemize}
\end{proposition}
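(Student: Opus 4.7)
The plan is to establish the pairwise equivalence of the four conditions (i)--(iv) and then to derive quasi maximality from Theorem \ref{agga28}, using that the Friedrichs extension $G_\mu$ is always an extremal extension of $G_0$. In particular, once (ii) is available, Theorem \ref{agga28} applied to $G=G_\mu$ yields quasi maximality at once.

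For (i) $\Leftrightarrow$ (ii), I would combine \eqref{agga29}, $JG_\mu=G_M^{-1}J$, with the defining requirement \eqref{agga14}. Condition (ii) says that $G_\mu$ satisfies \eqref{agga14}, i.e.\ $JG_\mu=G_\mu^{-1}J$; together with \eqref{agga29} this forces $G_\mu=G_M$. Since $G_\mu$ and $G_M$ are the endpoints of the operator interval of all nonnegative self-adjoint extensions of $G_0$, the interval collapses to a single point, which is (i). Conversely, uniqueness together with Theorem \ref{agga2} forces the unique nonnegative self-adjoint extension to satisfy \eqref{agga14} and to equal $G_\mu$, giving (ii).

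For (iii) $\Leftrightarrow$ (iv), I would perform the Cayley substitution $x=(I+G_0)f$, $T_0x=(I-G_0)f$ from \eqref{agga7}, which is a bijection from $D(G_0)$ onto $D(T_0)$. A direct expansion yields $\|x\|^2-\|T_0x\|^2=4(G_0f,f)$, and for $g\in\ker(I+G_0^*)$ the identity $G_0^*g=-g$ gives $(T_0x,g)=(f,g)-(G_0f,g)=2(f,g)$. Hence the ratio in \eqref{agga6} becomes $|(f,g)|^2/(G_0f,f)$, so (iv) is the reciprocal formulation of (iii). The equivalence (i) $\Leftrightarrow$ (iv) is the classical Krein criterion \cite{AK_Krein} for uniqueness of self-adjoint contractive extensions of a symmetric contraction, applied to $T_0$ with defect subspace $\sH\ominus D(T_0)=\ker(I+G_0^*)$; it transfers to $G_0$ through the Cayley bijection \eqref{agga16}.

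The principal obstacle is the careful appeal to (i) $\Leftrightarrow$ (iv): one must check that the Krein parametrization \eqref{agga22} degenerates (i.e.\ $T_M=T_\mu$) precisely when (iv) holds for every nonzero element of $\sH\ominus D(T_0)$, a fact that lives outside the excerpt and must be imported from the theory of self-adjoint contractive extensions. Once this is secured, uniqueness forces $G=G_\mu$, which is extremal, and Theorem \ref{agga28} completes the proof.
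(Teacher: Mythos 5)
Your proposal is correct and follows essentially the same route as the paper: quasi maximality is obtained by feeding the Friedrichs extension (which is extremal and, under (i)--(ii), satisfies \eqref{agga14} thanks to $JG_\mu=G_M^{-1}J$) into Theorem \ref{agga28}, and the equivalence (i) $\Leftrightarrow$ (ii) is read off from \eqref{agga29} and \eqref{agga14} exactly as in the paper. The one place where you genuinely diverge is the link between (iii) and (iv): the paper handles these by two separate citations to Krein's work (Theorem 9 for (i) $\Leftrightarrow$ (iii) and Theorem 6 for (i) $\Leftrightarrow$ (iv)), whereas you prove (iii) $\Leftrightarrow$ (iv) directly by the Cayley substitution $x=(I+G_0)f$, using $\|x\|^2-\|T_0x\|^2=4(G_0f,f)$ and $(T_0x,g)=2(f,g)$ for $g\in\ker(I+G_0^*)$; this computation checks out and makes the proof need only one imported Krein criterion instead of two, at the cost of no extra length. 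Both arguments ultimately rest on the same external fact --- that degeneracy of the operator interval $[T_\mu,T_M]$ is characterized by the divergence condition \eqref{agga6} --- which neither you nor the paper reproves, and which you correctly flag as the step that must be imported.
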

\begin{proof}
Let  $G_0$ be a unique nonnegative self-adjoint extension $G$.  Then, $G=G_\mu=G_M$ and, by virtue of \eqref{agga29},
$JG=G^{-1}J$.  Therefore,  the operator $G$  determines dual maximal subspaces  ${\mathfrak L}_\pm^{max}\supseteq{\mathfrak L}_\pm$.
Furthermore, $G$ is an extremal extension (since the Friedrichs extension and the Krein-von Neumann extension are extremal ones).
 In view of Theorem \ref{agga28}, ${\mathfrak L}_\pm$ are quasi maximal. Thus the condition (i) ensures the
 quasi maximality of  ${\mathfrak L}_\pm$.

The condition (ii) is equivalent to (i) due to \eqref{agga29} and \eqref{agga14}.
The equivalence (i) and (iii) follows form \cite[Theorem 9]{AK_Krein}.
The condition  (i)  reformulated for the Cayley transformation $T_0$ of $G_0$  (see \eqref{agga7})
means that $T_0$ has a unique self-adjoint contractive extension $T=T_\mu=T_M$. The latter
is equivalent to (iv) due to \cite[Theorem 6]{AK_Krein}.
\end{proof}

Assume that dual subspaces ${\mathfrak L}_\pm$ do not satisfy conditions of Proposition \ref{agga8}.
Then $T_\mu\not=T_M$ and the subspace  $\mathfrak{M}=\overline{R(T_M-T_\mu)}$ of  $\sH$
is nontrivial.  In view of \eqref{agga4},  this subspace reduces the operator $J$.  Therefore, the restriction of $J$ onto
$\mathfrak{M}$ determines the  operator of fundamental symmetry in $\mathfrak{M}$ and the space $\mathfrak{M}$
endowed with the indefinite inner product $[\cdot,\cdot]$ is the Krein space $(\mathfrak{M}, [\cdot,\cdot])$.

A subspace $\mathfrak{M}_1\subset\mathfrak{M}$ is called \emph{hypermaximal neutral} if the space
 $\mathfrak{M}$ can be decomposed $\mathfrak{M}=\mathfrak{M}_1\oplus{J}\mathfrak{M}_1$.

 Not every Krein space contains hypermaximal neutral subspaces. The sufficient and necessary condition is the coincidence
 of the dimension of a maximal positive subspace with the dimension of a maximal negative one \cite{AK_Azizov}.

\begin{theorem}\label{agga30}
Let dual subspaces ${\mathfrak L}_\pm$ do not satisfy conditions of Proposition  \ref{agga8}.
Then ${\mathfrak L}_\pm$ are quasi maximal subspaces
if and only if  the Krein space $(\mathfrak{M}, [\cdot,\cdot])$ contains a hypermaximal neutral subspace.
\end{theorem}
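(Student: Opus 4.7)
The plan is to combine three ingredients: Theorem \ref{agga28}, which reduces quasi-maximality to the existence of an extremal positive self-adjoint extension of $G_0$ satisfying \eqref{agga14}; the classical Arlinskii--Tsekanovskii characterization of extremal extensions as orthogonal projections in the operator-interval parameterization; and a direct algebraic computation showing when such a projection solves \eqref{agga23}.

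First, combining Theorems \ref{agga28} and \ref{agga22b} via the Cayley transform \eqref{agga16}, the dual definite subspaces ${\mathfrak L}_\pm$ are quasi maximal if and only if there exists a nonnegative self-adjoint contraction $X$ in $\mathfrak{M}$ which both (a) solves the operator equation $X = J(I-X)J$, and (b) parameterizes via \eqref{agga22} a self-adjoint contractive extension $T \supset T_0$ whose Cayley transform $G = (I-T)(I+T)^{-1}$ is extremal in the sense of \eqref{bebe86}.

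The key external input is the following fact from the extremal extension theory \cite{AK_Arlin, Arlin}: in the parameterization \eqref{agga22} of the operator interval $[T_\mu, T_M]$, the associated extension $G$ is extremal if and only if the parameter $X$ is an orthogonal projection in $\mathfrak{M}$. I would cite this rather than reprove it; the endpoints $X = 0$ and $X = I|_{\mathfrak{M}}$ correspond to the Friedrichs extension $G_\mu$ and the Krein--von Neumann extension $G_M$, which the text already identifies as the standard examples of extremal extensions. With this in hand, the problem reduces to deciding whether an orthogonal projection in $\mathfrak{M}$ can solve \eqref{agga23}.

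For the algebraic step, set $\mathfrak{M}_1 := R(X)$ for an orthogonal projection $X$ on $\mathfrak{M}$. Since $\mathfrak{M}$ reduces $J$ by \eqref{agga4} and $J$ is unitary on $\sH$, the operator $I - X$ is the orthogonal projection onto $\mathfrak{M} \ominus \mathfrak{M}_1$, and hence $J(I-X)J$ is the orthogonal projection onto $J(\mathfrak{M} \ominus \mathfrak{M}_1) = \mathfrak{M} \ominus J\mathfrak{M}_1$. The equation $X = J(I-X)J$ is therefore equivalent to $\mathfrak{M}_1 = \mathfrak{M} \ominus J\mathfrak{M}_1$, i.e.\ to the orthogonal decomposition $\mathfrak{M} = \mathfrak{M}_1 \oplus J\mathfrak{M}_1$, which is precisely the definition of hypermaximal neutrality of $\mathfrak{M}_1$ in the Krein space $(\mathfrak{M},[\cdot,\cdot])$. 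Conversely, any hypermaximal neutral subspace $\mathfrak{M}_1 \subset \mathfrak{M}$ yields $X := P_{\mathfrak{M}_1}$, which is an orthogonal projection satisfying \eqref{agga23}, and thus by the preceding step produces an extremal extension $G$ of $G_0$ obeying \eqref{agga14}.

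The main obstacle is not the algebra of the final step but the correct invocation of the characterization of extremal extensions as orthogonal projections inside the operator interval. One must check that this characterization transports faithfully through the Cayley transform and is compatible with the anticommutation condition against $J$; the latter compatibility is automatic here because $\mathfrak{M}$ reduces $J$ and $(T_M - T_\mu)^{1/2}$ commutes with $J$, so applying $J\cdot J$ to \eqref{agga22} respects the class of projection-type parameters.
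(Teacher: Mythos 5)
Your proposal is correct and follows essentially the same route as the paper: reduce via Theorem \ref{agga28} and Theorem \ref{agga22b} to finding an orthogonal projection $X$ in $\mathfrak{M}$ solving \eqref{agga23} (citing \cite{AK_Arlin} for the characterization of extremal extensions as projection-type parameters in \eqref{agga22}), and then observe that $X=J(I-X)J$ for a projection is equivalent to $\mathfrak{M}=X\mathfrak{M}\oplus J(X\mathfrak{M})$, i.e.\ to hypermaximal neutrality of $X\mathfrak{M}$. Your algebraic step is in fact spelled out slightly more explicitly than in the paper, but the substance is identical.
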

\begin{proof}  Assume that ${\mathfrak L}_\pm$ are quasi maximal. By Theorem \ref{agga28},
this means the existence of an extremal extension $G\supset{G}_0$ with condition
 \eqref{agga14}. Let $T$ be the Cayley transformation of $G$, see \eqref{agga16}.
By virtue of  Theorem \ref{agga22b}, the operator $T$ is described
by  \eqref{agga22},  where  $X$ is a solution of \eqref{agga23}.
Due  to  \cite[Section 7]{AK_Arlin},  extremal extensions  are specified in \eqref{agga22}
by the assumption that $X$ is an orthogonal projection in $\mathfrak{M}$.
Denote $\mathfrak{M}_1=X\mathfrak{M}$ and $\mathfrak{M}_2=(I-X)\mathfrak{M}$.
Since $X$ is the solution of \eqref{agga23} we decide that   $J\mathfrak{M}_1=\mathfrak{M}_2$.
Therefore, $\mathfrak{M}=\mathfrak{M}_1\oplus{J}\mathfrak{M}_1$ and
 $\mathfrak{M}_1$ is a hypermaximal neutral subspace of the Krein space
 $(\mathfrak{M}, [\cdot,\cdot])$.

 Conversely, let a hypermaximal neutral subspace $\mathfrak{M}_1$ be given.
 Then  $\mathfrak{M}=\mathfrak{M}_1\oplus{J}\mathfrak{M}_1$  and orthogonal projection $X$ on
 $\mathfrak{M}_1$ turns out to be the solution of \eqref{agga23}. The formula \eqref{agga22}
 with given $X$  determines  self-adjoint strong contraction  $T$ anticommuting with $J$ and
 its Cayley transformation $G$ defines the Hilbert space $(\mathfrak{H}_G, (\cdot,\cdot)_G)$ in which
 the direct sum \eqref{e8} is a dense set.
\end{proof}
\begin{corollary}\label{agga31}
 Let the Krein space $(\mathfrak{M}, [\cdot,\cdot])$ contain a hypermaximal neutral subspace. Then
 there exist infinitely many extensions of ${\mathfrak L}_\pm$  to dual maximal subspaces ${\mathfrak L}_\pm^{max}$
 such that $\cD={\mathfrak L}_+[\dot{+}]{\mathfrak L}_-$ is  a dense set in the Hilbert space $(\sH_G, (\cdot,\cdot)_G)$ associated with
 $\cD_{max}={\mathfrak L}_+^{max}[\dot{+}]{\mathfrak L}_-^{max}$  and,  at the same time,
 there exist  infinitely many extensions ${\mathfrak L}_\pm^{max}\supset{\mathfrak L}_\pm$
 such that $\cD$ is not dense in the corresponding Hilbert space $(\sH_G, (\cdot,\cdot)_G)$.
 \end{corollary}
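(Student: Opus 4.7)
The plan is to exploit the parametrization of dual maximal extensions developed in Section \ref{Sec2}, combined with the extremality criterion used in the proof of Theorem \ref{agga30}. By Theorem \ref{agga22b}, extensions ${\mathfrak L}_\pm^{max}\supset{\mathfrak L}_\pm$ are in one-to-one correspondence with nonnegative self-adjoint contractions $X$ in $\mathfrak{M}$ solving \eqref{agga23}. By Theorem \ref{agga28} together with the final line of the proof of Theorem \ref{agga30}, the set $\cD$ is dense in $(\sH_G,(\cdot,\cdot)_G)$ if and only if the associated extension $G$ is extremal, which in turn occurs if and only if the corresponding $X$ is an \emph{orthogonal projection} in $\mathfrak{M}$. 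The task therefore reduces to producing infinitely many projection solutions of \eqref{agga23} and, separately, infinitely many non-projection solutions.

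For the first half, I would use the reduction of $J$ to $\mathfrak{M}$ to split $\mathfrak{M}=\mathfrak{M}_+\oplus\mathfrak{M}_-$ with $\mathfrak{M}_\pm=\frac{1}{2}(I\pm J)\mathfrak{M}$ and write $X$ as a block matrix with respect to this decomposition. Equation \eqref{agga23} immediately forces the diagonal blocks to equal $\tfrac12 I$, and imposing $X^2=X$ then forces the off-diagonal block $B$ to satisfy $2B\colon\mathfrak{M}_-\to\mathfrak{M}_+$ unitary. The very hypothesis of the corollary guarantees $\mathfrak{M}\neq\{0\}$ and, via any given hypermaximal neutral subspace, that $\dim\mathfrak{M}_+=\dim\mathfrak{M}_-\geq 1$. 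Hence the unitary group between $\mathfrak{M}_-$ and $\mathfrak{M}_+$ already contains the one-parameter family $e^{i\theta}U_0$, yielding infinitely many distinct projection solutions $X$, hence infinitely many dual maximal extensions ${\mathfrak L}_\pm^{max}$ for which $\cD$ is dense in $\sH_G$.

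For the second half, I would fix one hypermaximal neutral subspace $\mathfrak{M}_1$ with projection $X_0$ onto $\mathfrak{M}_1$; then $X_1=I-X_0$, being the projection onto $J\mathfrak{M}_1$, is also a solution of \eqref{agga23}. By Remark \ref{agga32}, the entire convex combination
\begin{equation*}
X_\alpha=(1-\alpha)X_0+\alpha X_1,\qquad \alpha\in(0,1),
\end{equation*}
consists of nonnegative self-adjoint contractions solving \eqref{agga23}. A direct computation (or spectral inspection: $X_\alpha$ has spectrum $\{\alpha,1-\alpha\}$) shows that no $X_\alpha$ with $\alpha\in(0,1)$ is a projection. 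By the extremality criterion cited above, each such $X_\alpha$ produces a non-extremal extension, hence by Theorem \ref{agga28} the corresponding $\cD$ fails to be dense in $(\sH_G,(\cdot,\cdot)_G)$, and these extensions are all distinct since $X\mapsto T\mapsto{\mathfrak L}_\pm^{max}$ is injective.

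The main obstacle is the infinitude claim in the first half: one needs to be sure that the existence of a single hypermaximal neutral subspace automatically supplies infinitely many, and that these yield genuinely different dual maximal subspaces. Both points are settled by the block-matrix analysis above, which explicitly parametrizes projection solutions of \eqref{agga23} by the unitary group $\mathrm{U}(\mathfrak{M}_-,\mathfrak{M}_+)$ and confirms injectivity via the correspondences \eqref{agga22} and \eqref{agga21}.
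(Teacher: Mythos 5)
Your proposal is correct and rests on the same skeleton as the paper's proof: the one-to-one correspondence of Theorem \ref{agga22b} between extensions and solutions $X$ of \eqref{agga23}, the chain ``$\cD$ dense in $\sH_G$ $\iff$ $G$ extremal $\iff$ $X$ is an orthogonal projection'' extracted from Theorem \ref{agga28} and the proof of Theorem \ref{agga30}, and, for the non-dense half, exactly the paper's family $X_\alpha=(1-\alpha)X_0+\alpha(I-X_0)$, $\alpha\in(0,1)$, of non-projection solutions. Where you genuinely depart from the paper is the first half: the paper simply asserts that a Krein space containing one hypermaximal neutral subspace contains infinitely many, and lets each of them supply a projection solution of \eqref{agga23}; you instead decompose $\mathfrak{M}=\mathfrak{M}_+\oplus\mathfrak{M}_-$ with $\mathfrak{M}_\pm=\frac{1}{2}(I\pm J)\mathfrak{M}$, show that \eqref{agga23} forces both diagonal blocks of $X$ to be $\frac{1}{2}I$, and that idempotency forces the off-diagonal block to be $\frac{1}{2}U$ with $U:\mathfrak{M}_-\to\mathfrak{M}_+$ unitary, so the projection solutions are parametrized by the unitaries between $\mathfrak{M}_-$ and $\mathfrak{M}_+$ and the family $e^{i\theta}U_0$ gives infinitely many of them. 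This is more explicit and self-contained than the paper's appeal to the abundance of hypermaximal neutral subspaces, and it has the side benefit of re-deriving the equivalence ``\eqref{agga23} has a projection solution $\iff$ $\dim\mathfrak{M}_+=\dim\mathfrak{M}_-$ $\iff$ a hypermaximal neutral subspace exists'' underlying Theorem \ref{agga30}. Two small points to make airtight: the hypothesis as literally stated does not exclude $\mathfrak{M}=\{0\}$ (in which case the trivial subspace is vacuously hypermaximal neutral and the conclusion fails), so you should keep the standing assumption of Theorem \ref{agga30} that $T_\mu\neq T_M$, which guarantees $\mathfrak{M}_\pm\neq\{0\}$; and the distinctness of the resulting subspaces ${\mathfrak L}_\pm^{max}$ indeed follows, as you say, from the injectivity of $X\mapsto T$ in \eqref{agga22} together with the fact that ${\mathfrak L}_\pm^{max}=(I+T)\sH_\pm$ are the graphs of $T\upharpoonright_{\sH_\pm}$ and hence recover $T$.
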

\begin{proof} It follows from the proof of Theorem \ref{agga30} that a
hypermaximal neutral subspace $\mathfrak{M}_1$ determines the dual maximal subspaces
${\mathfrak L}_\pm^{max}\supseteq{\mathfrak L}_\pm$ such that $\cD$ is a dense set in the Hilbert space
 $(\sH_G, (\cdot,\cdot)_G)$ associated with $\cD_{max}$.  The point is that the
orthogonal projection $X$ on $\mathfrak{M}_1$ in  $\mathfrak{M}$ defines the subspaces ${\mathfrak L}_\pm^{max}$.
Therefore, one can construct infinitely many such extensions ${\mathfrak L}_\pm^{max}$  because there are infinitely many
 hypermaximal neutral subspaces in the Krein space (of course, if at least one exists).

If $X$ is the orthogonal projection on $\mathfrak{M}_1$, then $I-X$ is  the orthogonal projection on the hypermaximal neutral subspace
$J\mathfrak{M}_1$.  These two operators define  different pairs of dual maximal subspaces  ${\mathfrak L}_\pm^{max}\supset{\mathfrak L}_\pm$. 
 At the same time,  the operators $X_\alpha=(1-\alpha)X+\alpha(I-X), \ \alpha\in(0,1)$
solve \eqref{agga23} and they determine  dual maximal subspaces ${\mathfrak L}_\pm^{max}(\alpha)\supseteq{\mathfrak L}_\pm$
by the formulas \eqref{agga21} and  \eqref{agga22}.  The linear manifold $\cD$ can not be dense in the Hilbert space
$(\sH_G, (\cdot,\cdot)_G)$  associated with ${\mathfrak L}_+^{max}(\alpha)[\dot{+}]{\mathfrak L}_-^{max}(\alpha)$ 
 because $X_\alpha$ loses the property of being orthogonal projector
($X_\alpha^2\not=X_\alpha$) and $G$ cannot be an extremal extension.
\end{proof}

By virtue of the results above, one can classify dual subspaces  ${\mathfrak L}_\pm$
 in dependence of behavior of the soft $T_M$ and the hard $T_\mu$ extensions of $T_0$.
Precisely:

\begin{itemize}
\item[(A)] \ $T_\mu=T_M$ $\iff$ the subspaces  ${\mathfrak L}_\pm$ are quasi maximal,
there is unique extension of ${\mathfrak L}_\pm$ on to dual maximal subspaces ${\mathfrak L}_\pm^{max}$
$$
\cD={\mathfrak L}_+[\dot{+}]{\mathfrak L}_-  \ \rightarrow \  \cD_{max}={\mathfrak L}_+^{max}[\dot{+}]{\mathfrak L}_-^{max}
$$
and the linear manifold $\cD$ is dense in the Hilbert space $(\sH_G, (\cdot,\cdot)_G)$ associated with $\cD_{max}$;
\item[(B)] $T_\mu\not=T_M$ and
the Krein space $(\mathfrak{M}, [\cdot,\cdot])$ contains hypermaximal neutral subspaces $\iff$  the subspaces ${\mathfrak L}_\pm$ are quasi maximal, there are infinitely many extensions  $\cD \to \cD_{max}$  such that $\cD$ is dense in the Hilbert space $(\sH_G, (\cdot,\cdot)_G)$
associated with $\cD_{max}$ and simultaneously, there are infinitely many extensions  $\cD \to \cD_{max}$ for which $\cD$ cannot be a
dense set in $(\sH_G, (\cdot,\cdot)_G)$;
\item[(C)] $T_\mu\not=T_M$ and the Krein space $(\mathfrak{M}, [\cdot,\cdot])$  does not contain
 hypermaximal neutral subspaces $\iff$  the subspaces ${\mathfrak L}_\pm$ are not quasi maximal,
the total amount of possible extensions  ${\mathfrak L}_\pm \to {\mathfrak L}_\pm^{max}$ is not specified (a unique extension is possible as well as infinitely many ones), the linear manifold $\cD$  is not dense in the Hilbert space $(\sH_G, (\cdot,\cdot)_G)$ associated with $\cD_{max}$.
\end{itemize}

\subsection{Examples.}
Let $\sL_{\pm}$ be dual definite subspaces
and let ${\mathfrak L}_\pm^{max}\supset\sL_{\pm}$  be dual maximal definite subspaces.
The subspaces ${\mathfrak L}_\pm^{max}$ are described by \eqref{agga21},  where $T$ is
a self-adjoint strong contraction anticommuting with $J$. Similarly,
the subspaces ${\mathfrak L}_\pm$ are determined by  \eqref{agga20b}, where $T_0$ is the
restriction of $T$ onto $D(T_0)=M_+\oplus{M_-}$  where  $M_\pm$ are subspaces of  $\sH_\pm$. 
Denote
\begin{equation}\label{neww723}
\Xi=\sqrt{I-T^2}.
\end{equation}
The operator $\Xi$ is a positive self-adjoint contraction in $\sH$ which leaves 
the subspaces $\sH_{\pm}$ invariant.  
\begin{lemma}\label{new23}
The direct sum of  $\sL_{\pm}$ is a dense set in the Hilbert space
$(\sH_G,  (\cdot,\cdot)_G)$ associated with the sum ${\mathfrak L}_+^{max}[\dot{+}]{\mathfrak L}_-^{max}$ of dual maximal definite subspaces
${\mathfrak L}_\pm^{max}\supset\sL_{\pm}$ if and only if 
\begin{equation}\label{new38}
R(\Xi)\cap(\sH\ominus(M_-\oplus{M_+}))=\{0\}.
\end{equation}
\end{lemma}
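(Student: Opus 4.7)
The plan is to transfer the density question from $(\sH_G,(\cdot,\cdot)_G)$ back to the original Hilbert space $\sH$ via the Cayley transform $G=(I-T)(I+T)^{-1}$; it then becomes a condition on $\Xi$ alone.

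Since $\sH_G$ is by construction the completion of $D(G)=\cD_{max}$ in the $G$-norm, density of $\cD=D(G_0)$ in $\sH_G$ is equivalent to density of $D(G_0)$ in $(D(G),\|\cdot\|_G)$. Every $\phi\in D(G)=R(I+T)$ can be written uniquely as $\phi=(I+T)x$, because the strong contraction property of $T$ excludes $\pm 1$ as eigenvalues and hence makes $I\pm T$ injective; the same property makes $\Xi=\sqrt{I-T^{2}}=\sqrt{(I-T)(I+T)}$ injective as well. A direct computation using self-adjointness of $T$ then gives
\[
(G\phi,\psi)=((I-T)x,(I+T)y)=((I-T^{2})x,y)=(\Xi x,\Xi y)
\]
for $\phi=(I+T)x$ and $\psi=(I+T)y$. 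Therefore $I+T$ is an isometric bijection between $(\sH,\|\Xi\cdot\|)$ and $(D(G),\|\cdot\|_G)$.

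Under this identification, $D(G_0)=(I+T_0)D(T_0)=(I+T)D(T_0)$ corresponds to the closed subspace $D(T_0)=M_+\oplus M_-$ of $\sH$. Hence density of $D(G_0)$ in $(D(G),\|\cdot\|_G)$ reduces to density of $D(T_0)$ in $\sH$ with respect to the (generally noncomplete) norm $x\mapsto\|\Xi x\|$; equivalently, $\Xi D(T_0)$ is dense in $R(\Xi)$ in the $\sH$-norm. Injectivity of $\Xi$ guarantees that $R(\Xi)$ is itself dense in $\sH$, so this last density upgrades to $\Xi D(T_0)$ being dense in all of $\sH$.

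To finish, I would translate the latter into the vanishing of its orthogonal complement. Using self-adjointness of $\Xi$, one has $(\Xi D(T_0))^{\perp}=\{y\in\sH:\Xi y\in\sH\ominus D(T_0)\}$, and triviality of this set says $\Xi y\in\sH\ominus D(T_0)\Rightarrow y=0$. Since $\Xi$ is injective, this is equivalent to the image condition $R(\Xi)\cap(\sH\ominus D(T_0))=\{0\}$, which is precisely \eqref{new38}. The only mildly delicate point is verifying that $I+T$ is bijective onto $D(G)$ and that $\Xi$ is injective; both follow cleanly from the strong contraction assumption on $T$. Beyond that, the argument is a direct unfolding of the $G$-inner product, and the full machinery of extremal extensions is not needed here.
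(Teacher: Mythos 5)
Your proof is correct and follows essentially the same route as the paper: both rest on the identity $\|(I+T)x\|_G=\|\Xi x\|$ from \eqref{fff7}, which converts density of $\cD$ in $(\sH_G,(\cdot,\cdot)_G)$ into density of $\Xi D(T_0)$ in $\sH$, and then identify the relevant orthogonal complement with $\{y:\Xi y\in\sH\ominus(M_-\oplus M_+)\}$ using self-adjointness and injectivity of $\Xi$. Your write-up is somewhat more explicit about the ``only if'' direction and about why $I+T$ and $\Xi$ are injective (the paper argues via Cauchy sequences in the completion instead), but the substance is identical.
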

\begin{proof}
It follows from \eqref{fff7}:
$$
\|f\|_G^2=(Gf, f)=\|\sqrt{G}f\|^2=\|\Xi{x}\|^2, \qquad  f=(I+T)x\in{D}(G).
$$
Therefore, $\{{f}_n\}$ is a Cauchy sequence in $(\sH_G,  (\cdot,\cdot)_G)$
 if and only if  $\{\Xi{x_n}\}$ is a Cauchy sequence in $(\sH, (\cdot,\cdot))$. This means that 
 a one-to-one correspondence between  $\sH_G$ and $\sH$ can be established as follows:
 $$
 {f}_n\to{F_\gamma}\in\sH_G \  (\mbox{wrt.} \ \|\cdot\|_{G}) \iff \Xi{x_n}\to\gamma\in\sH  \  (\mbox{wrt.} \ \|\cdot\|).
 $$

 Let as assume that $F\in\sH_G$ is orthogonal to ${\mathfrak L}_+[\dot{+}]{\mathfrak L}_-$.
 Then $F=F_\gamma$, where $\gamma\in\sH$ and  for all $f\in{\mathfrak L}_+[\dot{+}]{\mathfrak L}_-=(I+T)D(T_0)$
 $$
0=(F_\gamma, f)_G=\lim_{n\to\infty}({f}_n, f)_{G}=\lim_{n\to\infty}(\Xi{x_n}, \Xi{x}_0)=(\Xi\gamma, x_0),
$$
where $x_0$ runs $D(T_0)$. By virtue of \eqref{neww4} and \eqref{new38}, $\gamma=0$. Therefore, 
$F_\gamma=0$.
\end{proof}

\subsubsection{How to construct dual quasi maximal
subspaces?}
We consider below an example (inspired by \cite{AK_AK}) which illustrates a general method of the construction of dual quasi maximal
subspaces.

Let $\{\gamma_n^+\}$ and $\{\gamma_n^-\}$ be orthonormal bases of subspaces $\sH_{\pm}$
in the fundamental decomposition \eqref{AK10}.
Every $\phi\in{\sH}$ has the representation
\begin{equation}\label{fff5}
\phi=\gamma^++\gamma^-=\sum_{n=1}^{\infty}(c_n^+\gamma_n^++c_n^-\gamma_n^-), \quad \gamma^{\pm}=\sum_{n=1}^{\infty}c_n^{\pm}\gamma_n^{\pm}\in\sH_{\pm},
\end{equation}
where $\{c_n^\pm\}\in{l_2(\mathbb{N})}$.
The operator
\begin{equation}\label{fff3}
T\phi=\sum_{n=1}^{\infty}i\alpha_n(c_n^+\gamma_n^--c_n^-\gamma_n^+),\qquad
\alpha_n=1-\frac{1}{n}
\end{equation}
is a self-adjoint contraction anticommuting with the fundamental symmetry $J$
of the Krein space $(\sH, [\cdot,\cdot])$.

The subspaces $\sL_{\pm}^{max}$ defined by \eqref{agga21} with the operator $T$ above  
are dual maximal definite. But they cannot be uniformly definite  since $\|T\|=1$, see Lemma \ref{agga1}.  
 
Let us fix elements $\chi^{\pm}{\in}\sH$,
\begin{equation}\label{agga28c}
\chi^+=\sum_{n=1}^{\infty}\frac{1}{n^\delta}\gamma_n^+,  \quad  \chi^-=\sum_{n=1}^{\infty}\frac{1}{n^\delta}\gamma_n^-,  \qquad
\delta>\frac{1}{2}
\end{equation}
and define the following subspaces of $\sH_\pm$:
$$
M_{+}=\{\gamma^+\in{\sH_+}: (\gamma^+,\chi^+)=0\}, \quad M_{-}=\{
\gamma^-\in{\sH_{-}} : (\gamma^-,\chi^-)=0\}.
$$

\begin{proposition}{\label{newprop}}
The dual definite subspaces 
\begin{equation}\label{agga29}
\sL_+=(I+T)M_+, \qquad \sL_-=(I+T)M_-
\end{equation}
 are quasi maximal for $\frac{1}{2}<\delta\leq\frac{3}{2}$.
In particular, $\frac{1}{2}<\delta\leq{1}$ corresponds to the case (A); the case (B) holds 
when  $1<\delta\leq\frac{3}{2}$.
\end{proposition}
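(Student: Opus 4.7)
The plan is to combine Lemma \ref{new23} with Krein's formula \eqref{kaa1}. Since $T\gamma_n^{+}=i\alpha_n\gamma_n^{-}$ and $T\gamma_n^{-}=-i\alpha_n\gamma_n^{+}$, one has $T^2\gamma_n^{\pm}=\alpha_n^2\gamma_n^{\pm}$, so the operator $\Xi=\sqrt{I-T^2}$ from \eqref{neww723} is diagonal in the basis $\{\gamma_n^{\pm}\}$ with eigenvalues $\beta_n=\sqrt{2n-1}/n$. Consequently $a\chi^{+}+b\chi^{-}\in R(\Xi)$ precisely when $\Xi^{-1}(a\chi^{+}+b\chi^{-})=\sum n^{-\delta}(a\gamma_n^{+}+b\gamma_n^{-})/\beta_n$ is square summable, i.e., $(|a|^2+|b|^2)\sum n^{-2\delta}/\beta_n^{2}<\infty$. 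Using $1/\beta_n^{2}\sim n/2$, this reduces to the asymptotic $\sum n^{1-2\delta}<\infty$, which holds iff $\delta>1$.

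For $\frac{1}{2}<\delta\leq 1$ the preceding shows $R(\Xi)\cap\mathrm{span}\{\chi^{+},\chi^{-}\}=\{0\}$; Lemma \ref{new23} then yields density of $\cD$ in $\sH_G$, whence quasi-maximality. To upgrade this to case $(A)$ I verify $T_\mu=T_M=T$ via \eqref{kaa1}. An element $h\in\sH\ominus\sqrt{I+T}D(T_0)$ is characterized by $\sqrt{I+T}h\in\mathrm{span}\{\chi^{+},\chi^{-}\}$. Since $\sqrt{I+T}$ commutes with $T$ and hence preserves each 2-dimensional block $\mathrm{span}\{\gamma_n^{+},\gamma_n^{-}\}$, with block determinant equal to $\beta_n$, solving $\sqrt{I+T}h=a\chi^{+}+b\chi^{-}$ block by block yields $\|h_n\|^{2}$ of order $(|a|^{2}+|b|^{2})\,n^{-2\delta}/\beta_n^{2}$, whose sum diverges for $\delta\leq 1$ unless $a=b=0$. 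Thus the defect is trivial, $Q_1=0$, and $T_\mu=T$; the identical argument for $\sqrt{I-T}$ gives $Q_2=0$ and $T_M=T$. This places us in case $(A)$.

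For $1<\delta\leq\frac{3}{2}$ the same block analysis now produces nontrivial 2-dimensional defects, so $Q_1,Q_2\neq 0$ and $T_\mu\neq T_M$. Because $\sqrt{I\pm T}Q_i\sqrt{I\pm T}$ maps into $\mathrm{span}\{\chi^{+},\chi^{-}\}$ by the very construction of $Q_i$, the subspace $\mathfrak{M}=\overline{R(T_M-T_\mu)}$ is contained in, and by a rank count equals, $\mathrm{span}\{\chi^{+},\chi^{-}\}$. This 2-dimensional space reduces $J$ with $\mathfrak{M}\cap\sH_{\pm}=\mathrm{span}\{\chi^{\pm}\}$, and since $\|\chi^{+}\|=\|\chi^{-}\|=(\sum n^{-2\delta})^{1/2}$, the vector $\chi^{+}+\chi^{-}$ is neutral ($[\chi^{+}+\chi^{-},\chi^{+}+\chi^{-}]=\|\chi^{+}\|^{2}-\|\chi^{-}\|^{2}=0$) and spans a hypermaximal neutral subspace of $(\mathfrak{M},[\cdot,\cdot])$. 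Theorem \ref{agga30} then delivers quasi-maximality, and combined with $T_\mu\neq T_M$ this is case $(B)$.

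The main technical obstacle is the block-wise inversion of $\sqrt{I\pm T}$: once one observes that $T$ preserves the 2D blocks $\mathrm{span}\{\gamma_n^{+},\gamma_n^{-}\}$, everything reduces to the same asymptotic $\sum n^{1-2\delta}$, and the dichotomy $\delta\leq 1$ versus $\delta>1$ simultaneously controls quasi-maximality (through Lemma \ref{new23}) and the equality $T_\mu=T_M$ (through the triviality of the defect spaces in \eqref{kaa1}).
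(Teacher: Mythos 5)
Your first step --- the computation of $R(\Xi)$ and the appeal to Lemma \ref{new23} --- reproduces the paper's argument for quasi-maximality and is correct: $\Xi$ acts on each block $\mathrm{span}\{\gamma_n^+,\gamma_n^-\}$ as the \emph{scalar} $\beta_n=\sqrt{2n-1}/n$, so whether $a\chi^++b\chi^-$ lies in $R(\Xi)$ is governed by $\sum n^{1-2\delta}$ independently of the direction $(a,b)$. Your identification of $\mathfrak{M}$ with $\mathrm{span}\{\chi^+,\chi^-\}$ and the hypermaximal neutral subspace $\mathrm{span}\{\chi^++\chi^-\}$ also matches the paper's treatment of case (B).

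The gap is in your route to case (A). You transfer the same ``block determinant'' estimate to $\sqrt{I\pm T}$, but these operators, unlike $\Xi$, are \emph{not} scalar on the blocks: $T$ has eigenvalues $\pm\alpha_n$ on $\mathrm{span}\{\gamma_n^+,\gamma_n^-\}$ with eigenvectors $\gamma_n^+\pm i\gamma_n^-$, so $\sqrt{I+T}$ has the two very different eigenvalues $\sqrt{1+\alpha_n}=\sqrt{2-1/n}\ge 1$ and $\sqrt{1-\alpha_n}=1/\sqrt{n}$ on each block. The determinant $\beta_n$ therefore does not control $\|h_n\|$ uniformly in the direction of the data. Concretely, $\chi^++i\chi^-=\sum_n n^{-\delta}(\gamma_n^++i\gamma_n^-)$ lies entirely in the well-conditioned eigendirections, so $h=\sum_n n^{-\delta}(2-1/n)^{-1/2}(\gamma_n^++i\gamma_n^-)$ belongs to $\sH$ for \emph{every} $\delta>\tfrac12$ and satisfies $\sqrt{I+T}\,h=\chi^++i\chi^-\in\sH\ominus D(T_0)$; hence $h\in Q_1\sH$ and $Q_1\neq 0$ (symmetrically, $\chi^+-i\chi^-$ gives $Q_2\neq 0$). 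So your claim that the defect spaces vanish for $\tfrac12<\delta\le 1$, and with it the conclusion $T_\mu=T_M=T$ via \eqref{kaa1}, does not follow; by your own formula, $Q_1\neq 0$ already forces $T_\mu\neq T$. Note that the paper does not attempt to establish $T_\mu=T_M$ by a defect computation at all: it certifies case (A) by invoking the uniqueness of the dual maximal extension from \cite[Proposition 6.3.9]{AK_AK} --- uniqueness among extensions \emph{anticommuting with $J$}, which by Theorem \ref{agga22b} is a statement about the solutions of \eqref{agga23} in $\mathfrak{M}$ and is strictly weaker than $Q_1=Q_2=0$ --- combined with the density already obtained from Lemma \ref{new23}. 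You would need to reprove that uniqueness statement by other means. A smaller omission: you never check that $\cD=\sL_+[\dot{+}]\sL_-$ is dense in the original $\sH$ for $\tfrac12<\delta\le\tfrac32$, which the entire framework presupposes; the paper cites \cite[p.~317]{AK_AK} for this.
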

\begin{proof}
First of all we note that $\cD={\mathfrak L}_+[\dot{+}]{\mathfrak L}_-$ is a dense set in $\sH$  for $\frac{1}{2}<\delta\leq\frac{3}{2}$ \cite[p. 317]{AK_AK}.

The subspaces $\sL_\pm$ in \eqref{agga29} are the restriction of dual maximal subspaces ${\mathfrak L}_\pm^{max}=(I+T)\sH$.
Let us show that, for $\frac{1}{2}<\delta\leq{1}$,  the set $\cD$
 is dense in the Hilbert space
$(\sH_G,  (\cdot,\cdot)_G)$ associated with  ${\mathfrak L}_\pm^{max}$.
Due to Lemma \ref{new23}, we should check 
\eqref{new38}. In view of \eqref{fff5}, \eqref{fff3}: 
$$
R(\Xi)=R(\sqrt{I-T^2})=\left\{\sum_{n=1}^{\infty}\sqrt{(1-\alpha_n^2)}(c_n^+\gamma_n^++c_n^-\gamma_n^-) \ : \  \forall\{c_n^\pm\}\in{l_2(\mathbb{N})}\right\}.
$$
On the other hand, $\sH\ominus(M_-\oplus{M_+})=\mbox{span}\{\chi^+, \chi^{-}\}$. Hence, the set   
$R(\Xi)\cap(\sH\ominus(M_-\oplus{M_+}))$ contains nonzero elements if and only if 
$$
\left\{\frac{1}{n^\delta\sqrt{(1-\alpha_n^2)}}=\frac{1}{n^{\delta-1/2}\sqrt{(2-1/n)}}\right\}\in{l_2(\mathbb{N})}
$$
that is impossible for $\frac{1}{2}<\delta\leq{1}$.  Therefore, relation \eqref{new38} holds and
$\sL_\pm$ are quasi maximal subspaces.  By \cite[Proposition 6.3.9]{AK_AK}, 
the dual subspaces $\sL_\pm$ have a unique extension to dual maximal ones ${\mathfrak L}_\pm^{max}$
when $\frac{1}{2}<\delta\leq{1}$ that corresponds to the case $(A)$.

If ${1}<\delta\leq\frac{3}{2}$, then the set $\cD$ cannot be dense in the Hilbert space $(\sH_G,  (\cdot,\cdot)_G)$
considered above.  However, for such values of $\delta$,  the dual subspaces  $\sL_\pm$ can be extended to different pairs of
dual maximal subspaces \cite[Proposition 6.3.9]{AK_AK}. This means that the subspace
$\mathfrak{M}=\overline{R(T_M-T_\mu)}$ is not trivial. The operators $T_\mu, T_M$ are extensions of $T_0$.  
Hence, they coincide on $D(T_0)=M_-\oplus{M_+}$.
Taking into account that $\sH=\mathfrak{M}\oplus\ker(T_M-T_\mu)$, we conclude that
$\mathfrak{M}\subset\mbox{span}\{\chi^+, \chi^{-}\}$. Moreover, $\mathfrak{M}=\mbox{span}\{\chi^+, \chi^{-}\}$. 
Indeed, if $\dim\mathfrak{M}=1$, then the operator $J$ in \eqref{agga23} coincides with $+I$ (or $-I$).  In this case,
the equation \eqref{agga23} has a unique solution $X=\frac{1}{2}I$ and, by Theorem \ref{agga22b},  there exists a unique extension
${\mathfrak L}_\pm^{max}\supset\sL_\pm$ that is impossible. Therefore,  
$\dim\mathfrak{M}=2$ and $\mathfrak{M}=\mbox{span}\{\chi^+, \chi^{-}\}$.
The Krein space $(\mbox{span}\{\chi^+, \chi^{-}\},  [\cdot, \cdot])$ contains infinitely many hypemaximal neutral subspaces.
By Theorem \ref{agga30},  there are infinitely many extensions 
 ${\mathfrak L}_\pm^{max}\supset\sL_\pm$, ${\mathfrak L}_\pm^{max}\not=(I+T)\sH$ such that
 $\cD$ is a dense set in the corresponding Hilbert spaces $(\sH_G,  (\cdot,\cdot)_G)$. Therefore,  $\sL_\pm$
 are quasi maximal (the case  $(B)$).
\end{proof}

\subsubsection{The uniqueness of dual maximal extension ${\mathfrak L}_\pm^{max}\supset{\mathfrak L}_\pm$  does not mean
that ${\mathfrak L}_\pm$ are quasi maximal.}\label{sec4.2.2}
Let us assume that $\chi^+=0$ and $\chi^{-}$ is defined as in \eqref{agga28c}. 
Then $M_+=\sH_+$ and the subspace $\sL_+$  in \eqref{agga29} coincides with 
${\mathfrak L}_+^{max}$.  This means that
the dual maximal definite subspaces ${\mathfrak L}_\pm^{max}\supset{\mathfrak L}_\pm$  are determined uniquely.
Precisely, ${\mathfrak L}_+^{max}=\sL_{+}$  and 
\begin{equation}\label{agga12}
{\mathfrak L}_-^{max}=\sL_{+}^{[\perp]}\supset{\mathfrak L}_-=(I+T)M_-,
\end{equation}
where $\sL_{+}^{[\perp]}$ denotes the maximal negative subspace orthogonal to $\sL_{+}$ with respect to the indefinite inner product
$[\cdot, \cdot]$. 

It follows from Proposition \ref{newprop} that the dual definite subspaces 
${\mathfrak L}_+^{max}$ and $\sL_-$ are quasi maximal (the case (A) of the classification above)
for $\frac{1}{2}<\delta\leq{1}$.

Reasoning by analogy with the proof of Proposition  \ref{newprop} we also conclude that the direct sum 
${\mathfrak L}_+^{max}[\dot{+}]{\mathfrak L}_-$ cannot be dense in the Hilbert space $(\sH_G,  (\cdot,\cdot)_G)$
constructed by ${\mathfrak L}_\pm^{max}$  for $1<\delta\leq\frac{3}{2}$. 
Therefore, the subspaces ${\mathfrak L}_+^{max}$ and $\sL_-$ cannot be quasi maximal  (the case $(C)$ of the classification above).

\section{Operator of $\cC$-symmetry associated with dual maximal definite subspaces}\label{5}
 Let  ${\mathfrak L}_\pm$ be dual definite subspaces such that theirs direct sum is a dense set in  
$\sH$. An operator $\cC_0$ \emph{associated with ${\mathfrak L}_\pm$}
is defined as follows: its domain $D(\cC_0)$ coincides with ${\mathfrak L}_+[\dot{+}]{\mathfrak L}_-$
and
\begin{equation}\label{new5}
\cC_0{f}=f_{+}-f_{-},  \qquad f=f_{+}+f_{-}\in{D(\cC_0)},  \quad f_\pm\in{\mathfrak L}_\pm.
\end{equation}

If $\cC_0$ is given, then the corresponding dual subspaces ${\mathfrak L}_\pm$
are recovered by the formula
$\sL_\pm=\frac{1}{2}(I\pm\cC_0)D(\cC_0)$.

\begin{proposition}\label{neww37}
The following assertions are equivalent:
 \begin{itemize}
  \item[(i)] $\cC_0$ is determined by dual subspaces ${\mathfrak L}_\pm$   with the use
  of  \eqref{new5};
   \item[(ii)] $\cC_0$ satisfies the relation $\cC_0^2f=f$ for all $f\in{D(\cC_0)}$ and
  $J\cC_0$ is a closed densely defined positive symmetric operator in $\sH$.
\end{itemize}
\end{proposition}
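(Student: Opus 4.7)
The plan is to prove both implications by passing through the intermediate operator $G_0 := J\cC_0$ and its Cayley transform $T_0$, reducing Proposition \ref{neww37} to the framework of Lemma \ref{neww2} and the discussion in Section \ref{Sec2}.

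For (i) $\Rightarrow$ (ii), I would verify each listed property directly from \eqref{new5}. The identity $\cC_0^2 f = f$ is immediate: $\cC_0(f_+ - f_-) = f_+ - (-f_-) = f$. Density of $D(\cC_0) = \cD$ is the standing hypothesis that $\sL_+[\dot{+}]\sL_-$ is dense in $\sH$. For symmetry and strict positivity of $J\cC_0$, I would exploit the duality $[f_+, g_-] = [g_+, f_-] = 0$ together with the definiteness of $\sL_\pm$ to obtain
\[
(J\cC_0 f, g) = [f_+ - f_-, g_+ + g_-] = [f_+, g_+] - [f_-, g_-],
\]
which is visibly symmetric in $(f,g)$ and, upon setting $g = f$, strictly positive when $f \neq 0$. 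Closedness of $J\cC_0$ is reduced to closedness of $G_0$ via the identity $\cC_0 f = J(I-T_0)(I+T_0)^{-1} f = J G_0 f$, which follows from writing $f = (I+T_0)x$, $\cC_0 f = (I+T_0)Jx$, and invoking $JT_0 = -T_0J$ from Lemma \ref{neww2}; closedness of $G_0$ is then inherited from boundedness of $T_0$ on the closed subspace $D(T_0)$.

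For the converse (ii) $\Rightarrow$ (i), I would set $G_0 := J\cC_0$ and reverse the Cayley transform to recover $\sL_\pm$. The algebraic hypothesis $\cC_0^2 = I$ on $D(\cC_0)$ forces $JG_0 \cdot JG_0 = I$ on $D(G_0)$, whence $G_0 J G_0 = J$ on $D(G_0)$ — precisely condition \eqref{AK71}. The inverse Cayley transform $T_0 := (I-G_0)(I+G_0)^{-1}$ then defines a symmetric strong contraction on the closed subspace $D(T_0) = R(I+G_0)$; writing $f = (I+G_0)y$ and using $JG_0 y = G_0^{-1} Jy$ shows both that $J$ preserves $D(T_0)$ and that $JT_0 = -T_0 J$ on $D(T_0)$. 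Lemma \ref{neww2} then yields dual definite subspaces $\sL_\pm := (I+T_0) P_\pm D(T_0)$ whose algebraic sum equals $D(G_0) = D(\cC_0)$ and is hence dense in $\sH$. Finally, the $\cC_0$ reconstructed from these $\sL_\pm$ via \eqref{new5} coincides with the original one by the chain $(I+T_0)Jx = J(I-T_0)x = JG_0 f$ for $f = (I+T_0)x$.

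The principal subtlety lies in the backward direction: one must extract from the purely algebraic hypothesis $\cC_0^2 = I$ the operator-theoretic fact that $J$ preserves the closed subspace $D(T_0)$ and anticommutes with $T_0$ there. This is where the implicit domain condition $\cC_0 D(\cC_0) \subseteq D(\cC_0)$ built into the meaning of $\cC_0^2 = I$ is indispensable, and where the positivity of $J\cC_0$ enters to ensure injectivity of $I+G_0$ and hence well-definedness of $T_0$. Once \eqref{AK71} and the anticommutation $JT_0 = -T_0J$ are in place, both implications reduce to routine bookkeeping within the Cayley-transform dictionary of Section \ref{Sec2}.
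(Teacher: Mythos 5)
Your proposal is correct and follows essentially the same route as the paper: both directions are handled by identifying $J\cC_0$ with the positive symmetric operator $G_0$ of \eqref{agga7}, extracting \eqref{AK71} from $\cC_0^2=I$, and passing through the Cayley transform $T_0$ with \eqref{fff1} and \eqref{agga20b} to recover the dual subspaces. The paper's proof is terser (it cites the already-established properties of $G_0$ from Section \ref{Sec2} rather than reverifying symmetry, positivity and closedness), but the argument is the same.
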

\begin{proof}   
By virtue of \eqref{agga15} and \eqref{new5}, $(G_0f, g)=[\cC_0{f},g]$,  $\forall{f, g}\in\cD(G_0)$.
Therefore, $G_0=J\cC_0$, where $G_0$ is a closed densely defined positive symmetric operator acting in $\sH$ and
defined by \eqref{agga7}. The implication $(i)\to(ii)$ is proved.

$(ii)\to(i)$. The operator $G_0=J\cC_0$  satisfies \eqref{AK71} (since $\cC_0^2=I$ on $\cD(G_0)$). 
This operator has the Cayley transform $T_0$ (see \eqref{agga7}) which is a strong contraction in $\sH$
with the condition \eqref{fff1}. Substituting $T_0$ into \eqref{agga20b} we obtain the required dual
definite subspaces  ${\mathfrak L}_\pm$ which generate $\cC_0$.
\end{proof}
 
 We will say that  $\cC_0$ is \emph{an operator of $\cC$-symmetry}  if  $\cC_0$ is
 associated with dual maximal definite subspaces  ${\mathfrak L}_\pm^{max}$. 
 In this case, the notation $\cC$ will be used instead of $\cC_0$.  An operator of
 $\cC$-symmetry admits the presentation $\cC=Je^Q$, where $Q$ is a self-adjoint operator in $\sH$ 
 such that $JQ=-QJ$ \cite{KS}.
 
 Let $\cC_0$ be an operator associated with dual definite subspaces ${\mathfrak L}_\pm$. 
 Its extension to the operator of $\cC$-symmetry $\cC$ is equivalent to the construction of 
 dual maximal definite subspaces ${\mathfrak L}_\pm^{max}\supset\sL_{\pm}$. 
By Theorem \ref{agga2}, each operator $\cC_0$ can be extended to an operator of $\cC$-symmetry
which, in general, is not determined uniquely. Its choice $\cC\supset\cC_0$ determines
the new Hilbert space $(\sH_G,  (\cdot,\cdot)_G)$, where 
\begin{equation}\label{neww11}
G=J\cC=e^Q.
\end{equation}

If ${\mathfrak L}_\pm$ are quasi maximal, then there exists 
a dual maximal extension
${\mathfrak L}_\pm^{max}\supset{\mathfrak L}_\pm$ such that $\cC_0$ is extended to $\cC$
by continuity in the new Hilbert space  $(\sH_G,  (\cdot,\cdot)_G)$ 
 generated by ${\mathfrak L}_\pm^{max}$. 

If the subspaces ${\mathfrak L}_\pm$  are not quasi maximal, then
the sum ${\mathfrak L}_+[\dot{+}]{\mathfrak L}_-$  loses the property of being dense 
in each Hilbert space $(\sH_G, (\cdot,\cdot)_G)$ constructed by the dual maximal subspaces
${\mathfrak L}_\pm^{max}\supset{\mathfrak L}_\pm$. In this case,  the extension by continuity of $\cC_0$ to an operator of $\cC$-symmetry
is impossible. In other words, all possible extensions of $\cC_0$  to an operator of $\cC$-symmetry generate Hilbert spaces
 $(\sH_G, (\cdot,\cdot)_G)$  each of which contains a nontrivial part $\sH_G\ominus_G({\mathfrak L}_+[\dot{+}]{\mathfrak L}_-)$ 
 that has no direct relationship to the original dual subspaces ${\mathfrak L}_\pm$. 

Summing up, we can rephrase the classification in Section \ref{4.1} as follows:
\begin{itemize}
\item[(A{'})]  the extension of $\cC_0$ to an operator of $\cC$-symmetry is unique and it coincides with the extension of 
$\cC_0$ by continuity in the Hilbert space $(\sH_G,  (\cdot,\cdot)_G)$, where $G=G_\mu$ is the Friedrichs extension of $G_0$;
\item[(B')] the extension of $\cC_0$ to an operator of $\cC$-symmetry is not unique.  Infinitely many operators $\cC$
can be realized via the extension of $\cC_0$ by continuity in the corresponding Hilbert spaces $(\sH_G,  (\cdot,\cdot)_G)$.
At the same time, there exist infinitely many extensions $\cC\supset\cC_0$ 
which generate Hilbert spaces $(\sH_G,  (\cdot,\cdot)_G)$  containing the closure of ${\mathfrak L}_+[\dot{+}]{\mathfrak L}_-$ 
as a proper subspace;
 \item[(C')] there is no extension of $\cC_0$ to an operator of $\cC$-symmetry by continuity in the corresponding Hilbert space $(\sH_G,  (\cdot,\cdot)_G)$.
 The total amount of possible extensions  $\cC\supset\cC_0$ is not specified (a unique extension is possible as well as infinitely many ones).
 \end{itemize}
 
\section{$J$-orthonormal quasi bases}\label{6}
\subsection{Quasi bases.}
 A sequence $\{f_n\}$ is called orthonormal in the Krein space $(\sH, [\cdot,\cdot])$, (briefly,  $J$-orthonormal)
if $\{f_n\}$ is orthonormalized with respect to the indefinite inner product $[\cdot,\cdot]$, i.e.,
if  $|[f_n, f_m]|=\delta_{nm}$.  In what follows, the sequence $\{f_n\}$  is assumed to be complete 
(i.e., the span of $\{f_n\}$ is a densely defined set in $\sH$). 

Separating the sequence  $\{f_n\}$ by the signs of $[f_n,f_n]$:
\begin{equation}\label{bebe95}
f_{n}=\left\{\begin{array}{l}
f_{n}^+ \quad \mbox{if} \quad [f_{n},f_{n}]=1, \\
f_{n}^- \quad \mbox{if} \quad [f_{n},f_{n}]=-1
\end{array}\right.
\end{equation}
we obtain two sequences of positive $\{f_n^+\}$ and negative $\{f_n^-\}$  orthonormal elements.
Denote by  ${\mathfrak L}_+$ and ${\mathfrak L}_-$  the closure in the Hilbert space $(\sH, (\cdot, \cdot))$ of
the linear spans generated by the sets $\{f_n^+\}$ and $\{f_n^-\}$, respectively. 
By the construction, ${\mathfrak L}_\pm$ are dual definite subspaces and their direct sum \eqref{e8} is dense in ${\mathfrak H}$.

\begin{definition} A complete $J$-orthonormal sequence $\{f_n\}$ is called a quasi basis
of the Hilbert space $(\sH, (\cdot, \cdot))$
if the corresponding dual subspaces ${\mathfrak L}_\pm$ are quasi maximal.
\end{definition}

If a $J$-orthonormal sequence $\{f_n\}$ is a basis in $\sH$, then
the dual subspaces ${\mathfrak L}_\pm$
are maximal definite, i.e., ${\mathfrak L}_\pm={\mathfrak L}_\pm^{max}$  \cite[Statement 10.12 in Chapter 1]{AK_Azizov}.
Therefore,  $\{f_n\}$ is also a quasi basis in $\sH$. 

\begin{proposition}
Let a $J$-orthonormal sequence $\{f_n\}$ be a quasi basis in $\sH$, then
its biorthogonal sequence $\{\gamma_n=\mbox{sign}([f_n,f_n])Jf_n\}$ is also a quasi basis.
\end{proposition}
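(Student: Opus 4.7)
The plan is to identify the dual definite subspaces generated by $\{\gamma_n\}$ as the images $J{\mathfrak L}_\pm$ and then transfer quasi maximality using the fact that $J$ is simultaneously a Hilbert-space unitary on $(\sH,(\cdot,\cdot))$ and an isometry of the indefinite inner product $[\cdot,\cdot]$.

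First I would verify the structure of $\{\gamma_n\}$. Writing $s_k=\mbox{sign}([f_k,f_k])$ so that $\gamma_k=s_k Jf_k$, a direct computation gives
$$
[\gamma_n,\gamma_m]=(J\gamma_n,\gamma_m)=s_n s_m(f_n,Jf_m)=s_n s_m[f_n,f_m],
$$
so $|[\gamma_n,\gamma_m]|=\delta_{nm}$ and $[\gamma_n,\gamma_n]$ shares its sign with $[f_n,f_n]$. Hence the positive elements of $\{\gamma_n\}$ are $\{Jf_n^+\}$ and the negative ones are $\{-Jf_n^-\}$. Since $J$ is a Hilbert-space unitary, completeness of $\{f_n\}$ yields completeness of $\{\gamma_n\}$, and the associated dual definite subspaces are $\tilde{\mathfrak L}_\pm=J{\mathfrak L}_\pm$.

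Next I would record that $J$ preserves the indefinite inner product: $[Jf,Jg]=(J^2 f,Jg)=(f,Jg)=(Jf,g)=[f,g]$. Consequently, if ${\mathfrak L}_\pm^{max}\supset{\mathfrak L}_\pm$ is a dual maximal extension witnessing quasi maximality, with associated positive self-adjoint extension $G\supset G_0$ and Hilbert space $(\sH_G,(\cdot,\cdot)_G)$, then $J{\mathfrak L}_\pm^{max}\supset\tilde{\mathfrak L}_\pm$ is a dual maximal extension of $\tilde{\mathfrak L}_\pm$; let $\tilde G$ and $(\sH_{\tilde G},(\cdot,\cdot)_{\tilde G})$ denote the associated objects. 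Using \eqref{agga15} together with the identity above,
$$
(Jf,Jg)_{\tilde G}=[Jf_+,Jg_+]-[Jf_-,Jg_-]=[f_+,g_+]-[f_-,g_-]=(f,g)_G,
$$
so $J\upharpoonright_{D(G)}$ is isometric from $(D(G),(\cdot,\cdot)_G)$ onto $(D(\tilde G),(\cdot,\cdot)_{\tilde G})$ and extends by continuity to a unitary isomorphism $\sH_G\to\sH_{\tilde G}$.

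Finally, quasi maximality of ${\mathfrak L}_\pm$ means that $\cD={\mathfrak L}_+[\dot{+}]{\mathfrak L}_-$ is dense in $(\sH_G,(\cdot,\cdot)_G)$. Under the unitary just constructed, $J\cD=J{\mathfrak L}_+[\dot{+}]J{\mathfrak L}_-=\tilde{\mathfrak L}_+[\dot{+}]\tilde{\mathfrak L}_-$ is dense in $(\sH_{\tilde G},(\cdot,\cdot)_{\tilde G})$, so $\tilde{\mathfrak L}_\pm$ are quasi maximal and $\{\gamma_n\}$ is a quasi basis. The main obstacle I anticipate is the bookkeeping around the abstract completion from Section \ref{ref1}: one must verify that $J\upharpoonright_{D(G)}$ really extends to a well-defined bijective unitary between the completions $\sH_G$ and $\sH_{\tilde G}$, and that its action on the image of $D(G_0)$ inside $\sH_G$ coincides with the ordinary action of $J$ on $\cD\subset\sH$. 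This is routine given the isometric identity above, but it is the only step where one leaves the Hilbert space $\sH$ and must be careful with the identification.
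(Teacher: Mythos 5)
Your proof is correct, but it takes a genuinely different route from the paper's. The paper also begins by identifying the dual subspaces of $\{\gamma_n\}$ as $J{\mathfrak L}_\pm$, but then switches entirely to the operator picture: it notes that the positive symmetric operator associated with $J{\mathfrak L}_+[\dot{+}]J{\mathfrak L}_-$ via \eqref{agga7} is $G_0^{-1}$ (a consequence of \eqref{AK71}), invokes Theorem \ref{agga28} to get an extremal extension $G\supset G_0$ satisfying \eqref{agga14}, and concludes by asserting that $G^{-1}$ is then an extremal extension of $G_0^{-1}$ satisfying \eqref{agga14}. You instead argue directly from Definition \ref{agga25}: since $J$ preserves $[\cdot,\cdot]$, it carries a witnessing pair ${\mathfrak L}_\pm^{max}$ to a dual maximal pair $J{\mathfrak L}_\pm^{max}$ and, by \eqref{agga15}, induces a unitary between the completions $\sH_G$ and $\sH_{\tilde G}$, under which density of $\cD$ transports to density of $J\cD$. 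The trade-off: the paper's argument is a one-liner given the extremal-extension machinery, but it silently uses the nontrivial fact that the inverse of an extremal extension is an extremal extension of the inverse operator; your argument avoids that input entirely, is self-contained modulo the completion bookkeeping you correctly flag, and in fact yields a bit more, namely that the Krein spaces $(\sH_G,[\cdot,\cdot]_G)$ and $(\sH_{\tilde G},[\cdot,\cdot]_{\tilde G})$ attached to the two quasi bases are unitarily equivalent via $J$ (in the spirit of Remark \ref{new61}). One small point worth making explicit in your write-up: that $J{\mathfrak L}_\pm^{max}$, being dual maximal definite, corresponds under the bijection stated after \eqref{agga16} to a positive self-adjoint extension $\tilde G\supset \tilde G_0=G_0^{-1}$ satisfying \eqref{agga14}, so that your density statement really is the condition required by Definition \ref{agga25}.
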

\begin{proof}  The  biorthogonal sequence $\{\gamma_n\}$ determines dual definite subspaces $J{\mathfrak L}_\pm$.
 The corresponding positive symmetric operator associated with $J{\mathfrak L}_+[\dot{+}]J{\mathfrak L}_-$ (see \eqref{agga7})             
 coincides with $G_0^{-1}$. By Theorem \ref{agga28} there exists an extremal extension $G\supset{G_0}$ which satisfies \eqref{agga14}.
 Then, $G^{-1}$ is an extremal extension of $G_0^{-1}$ which satisfies   \eqref{agga14}. Therefore, $J{\mathfrak L}_\pm$
 are quasi maximal.
 \end{proof}

\begin{theorem}\label{agga38} 
Let  $\{f_n\}$ be a complete $J$-orthonormal sequence in $\sH$ and
let $\cC_0$ be associated with the subspaces ${\mathfrak L}_\pm$ generated by $\{f_n\}$
The following statements are equivalent:
\begin{itemize}
\item[(i)]	 $\{f_n\}$ is a quasi basis of $\sH$;
\item[(ii)] there exists an operator of $\cC$-symmetry $\cC\supset\cC_0$
such that $\{f_n\}$ turns out to be an orthonormal basis in the
Hilbert space  $(\sH_G, (\cdot, \cdot)_G)$ generated by $\cC$.
\item[(iii)]  there exists a self-adjoint operator $Q$ in the Hilbert space $\sH$, which anticommutes with
$J$ and such that the sequence $\{g_n=e^{Q/2}f_n\}$ is an orthonormal basis of $\sH$ and each $g_n$ belongs to one of the subspaces
$\sH_\pm$ of the fundamental decomposition \eqref{AK10}.
\end{itemize}
\end{theorem}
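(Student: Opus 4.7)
I would split the three-way equivalence into $(i)\Leftrightarrow(ii)$ (a genuine statement about quasi-maximality, to be handled via Theorem~\ref{agga28}) and $(ii)\Leftrightarrow(iii)$ (a change of variables via the representation $\cC=Je^Q$ recalled in Section~\ref{5}). For $(i)\Leftrightarrow(ii)$, Theorem~\ref{agga28} identifies quasi-maximality of ${\mathfrak L}_\pm$ with the existence of an extremal positive self-adjoint extension $G\supset G_0$ obeying \eqref{agga14}, and by Section~\ref{5} the latter is the same data as an operator $\cC=JG$ of $\cC$-symmetry extending $\cC_0$. Given such $\cC$, plugging $\{f_n\}$ into \eqref{agga15} and splitting by $[f_n,f_n]=\pm 1$ gives $(f_n,f_m)_G=\pm[f_n,f_m]=\delta_{nm}$, so $\{f_n\}$ is orthonormal in $(\sH_G,(\cdot,\cdot)_G)$. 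For completeness I would chain two density facts: (a) $\mathrm{span}\{f_n^\pm\}$ is $\sH$-dense in ${\mathfrak L}_\pm$ by the definition of ${\mathfrak L}_\pm$, hence $[\cdot,\cdot]$-dense via $|[g,g]|\le(g,g)$; (b) quasi-maximality combined with $(\cdot,\cdot)_G$-orthogonal projection onto the summands in \eqref{e8c} forces ${\mathfrak L}_\pm$ to be $(\cdot,\cdot)_G$-dense in $\hat{\sL}_\pm^{max}$. Since $(\cdot,\cdot)_G$ and $[\cdot,\cdot]$ agree on ${\mathfrak L}_\pm$, concatenating (a) and (b) yields density of $\mathrm{span}\{f_n\}$ in $\sH_G$, giving (ii). Conversely, if (ii) holds then $\mathrm{span}\{f_n\}\subset D(G_0)$ is $(\cdot,\cdot)_G$-dense in $\sH_G$, which is the extremality condition \eqref{bebe86} for $G=J\cC$, and Theorem~\ref{agga28} returns quasi-maximality.

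For $(ii)\Leftrightarrow(iii)$ I would use $\cC=Je^Q$ with $Q=Q^*$ anticommuting with $J$, so $G=e^Q$ and $\sqrt{G}=e^{Q/2}$. The key tool is the map $U:f\mapsto e^{Q/2}f$, initially defined on $D(G)$: it is $(\cdot,\cdot)_G$-isometric and extends to a unitary $U:\sH_G\to\sH$, because $D(G)$ is a core for $\sqrt{G}$ and $e^{Q/2}D(G)$ is $\sH$-dense (the latter using $\ker G=\{0\}$ from the remark after \eqref{AK71} and a spectral truncation $\chi_{[1/n,n]}(G)$). Under $U$, the orthonormal-basis property of $\{f_n\}$ in $\sH_G$ transfers to $\{g_n=Uf_n=e^{Q/2}f_n\}$ in $\sH$. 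Using $JQ=-QJ$, hence $Je^{Q/2}=e^{-Q/2}J$, one checks that $Jg_n=\pm g_n$ is equivalent to $\cC f_n=Je^Qf_n=\pm f_n$, i.e. to $f_n\in {\mathfrak L}_\pm^{max}$, and the sign is forced by $[f_n,f_n]=(Jg_n,g_n)=\pm 1$ to match the partition \eqref{bebe95}. Conversely, starting from $Q$ as in (iii), I set $\cC:=Je^Q$; the identity $Je^Q=e^{-Q}J$ yields $\cC^2=I$ on $D(e^Q)$ and \eqref{agga14} for $G=e^Q$, so $\cC$ is an operator of $\cC$-symmetry, and the same sign computation together with the $\sH$-closedness of ${\mathfrak L}_\pm^{max}$ forces ${\mathfrak L}_\pm\subset {\mathfrak L}_\pm^{max}$, i.e. $\cC\supset\cC_0$; transferring the orthonormal basis through $U$ then delivers (ii).

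The main obstacle is the density chain in $(i)\Rightarrow(ii)$: one must carefully patch $\sH$-density of $\mathrm{span}\{f_n^\pm\}$ in ${\mathfrak L}_\pm$ with $(\cdot,\cdot)_G$-density of ${\mathfrak L}_\pm$ in $\hat{\sL}_\pm^{max}$, exploiting $|[g,g]|\le(g,g)$ and the projection structure of \eqref{e8c} to pass between the two Hilbert-space topologies; without this step the orthonormal system $\{f_n\}$ could fail to be total in $\sH_G$ even when all other pieces align. A secondary technicality is the unitarity of $U:\sH_G\to\sH$ in $(ii)\Leftrightarrow(iii)$ when $G$ is unbounded with unbounded inverse, which I would handle by standard spectral calculus, establishing the coreness of $D(G)$ for $\sqrt{G}$ and the $\sH$-density of $\sqrt{G}D(G)$ via spectral truncations.
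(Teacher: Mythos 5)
Your proof is correct, and for $(i)\Leftrightarrow(ii)$ it follows the paper's route exactly: quasi-maximality is traded for an extremal extension $G\supset G_0$ satisfying \eqref{agga14} via Theorem~\ref{agga28}, orthonormality comes from $(f_n,f_m)_G=(Gf_n,f_m)=[\cC f_n,f_m]=\delta_{nm}$, and totality of $\{f_n\}$ in $\sH_G$ comes from density of $D(G_0)$ there. You are in fact more careful than the paper at one point: the paper simply asserts that $\mathrm{span}\{f_n\}$ is dense in $(\sH_G,(\cdot,\cdot)_G)$, whereas you supply the needed two-step chain ($\sH$-density of $\mathrm{span}\{f_n^\pm\}$ in ${\mathfrak L}_\pm$ combined with $\|g\|_G^2=|[g,g]|\le\|g\|^2$ on a definite subspace, then $(\cdot,\cdot)_G$-density of ${\mathfrak L}_\pm$ in $\hat{\sL}_\pm^{max}$ from quasi-maximality); note only that on ${\mathfrak L}_-$ the forms $(\cdot,\cdot)_G$ and $[\cdot,\cdot]$ agree up to sign, which does not affect the argument. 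For $(ii)\Leftrightarrow(iii)$ the substance is again the same ($\cC=Je^Q$, $G=e^Q$, the sign computation $Jg_n=\pm g_n\iff\cC f_n=\pm f_n$), but you package the completeness transfer differently: the paper routes it through Lemma~\ref{new23}, i.e.\ the criterion $R(\Xi)\cap(\sH\ominus(M_-\oplus M_+))=\{0\}$ with $\Xi=\cosh^{-1}(Q/2)$ and the auxiliary vectors $\cosh(Q/2)g_n$, whereas you build the unitary $U=e^{Q/2}\colon\sH_G\to\sH$ (isometric since $\|e^{Q/2}f\|^2=(Gf,f)=\|f\|_G^2$, surjective since $\sqrt{G}\,D(G)$ is dense because $\ker G=\{0\}$) and transport the basis in one stroke. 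The two devices are equivalent --- the proof of Lemma~\ref{new23} rests on exactly this isometry --- but your version is shorter and avoids the bookkeeping with $M_\pm$, while the paper's has the advantage of reusing a lemma needed elsewhere (e.g.\ in Proposition~\ref{newprop}). The only loose end in your sketch is routine: in $(iii)\Rightarrow(ii)$ you should record, as the paper does, that $g_n\in\sH_\pm$ gives $\pm g_n=Jg_n=e^{-Q/2}Jf_n\in R(e^{-Q/2})=D(e^{Q/2})$, hence $f_n\in D(e^Q)=D(\cC)$, which is needed before one may write $\cC f_n=\pm f_n$ and conclude $\cC\supset\cC_0$.
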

\begin{proof} $(i)\to(ii)$. If  $\{f_n\}$ is a quasi basis,  then there exists an extremal extension $G\supset{G_0}$ which  satisfies \eqref{agga14}
and the linear span of $\{f_n\}$ is dense in $(\sH_G, (\cdot, \cdot)_G)$.
It follows from \eqref{agga15} and \eqref{neww11} that  
\begin{equation}\label{neww49}
(f_n, f_m)_G=(Gf_n, f_m)=[\cC{f}_n, f_m]=\delta_{nm}.
\end{equation}
Therefore, $\{f_n\}$ is an orthonormal basis in the Hilbert space $(\sH_G, (\cdot, \cdot)_G)$.
The inverse implication $(ii)\to(i)$ is obvious. 

$(ii)\to(iii)$. Since $\cC=Je^Q$ and $G=e^Q$,  where $Q$ satisfies the condition of item $(iii)$,
the relation \eqref{neww49}  takes the form  $(e^{Q/2}f_n, e^{Q/2}f_m)=\delta_{nm}$.
Hence, $\{g_n=e^{Q/2}f_n\}$ is an orthonormal sequence in $\sH$.  The completeness of 
$\{g_n\}$ in $\sH$ will be established with the use of Lemma \ref{new23}.  Before doing this we
note that  the dual maximal definite subspaces corresponding to 
$\cC=Je^Q$ are given by \eqref{agga21},  where $T=-\tanh\frac{Q}{2}$ \cite{KS}. 
Therefore, the bounded operator $\Xi$ in \eqref{neww723}  coincides with $\cosh^{-1}{Q/2}$,  where
 $\cosh{Q/2}=\frac{1}{2}(e^{Q/2}+e^{-Q/2})$.    
 
Each $g_n$ belongs to the domain of definition of $\cosh{Q/2}$ and
\begin{equation}\label{agga45}
(I-\tanh{Q}/{2})\cosh{Q}/{2}g_n=(\cosh{Q}/{2} - \sinh{Q}/{2})g_n=e^{-{Q}/{2}}g_n=f_n.
\end{equation}
Comparing the obtained relation with \eqref{agga20b}, \eqref{neww4} and taking into account 
the definition of ${\mathfrak L}_\pm$,  we conclude that 
$M_-\oplus{M_+}$ coincides with the closure of  $\mbox{span}\{\cosh{Q}/{2}g_n\}$. Therefore,
$$
\sH\ominus(M_-\oplus{M_+})=\sH\ominus\mbox{span}\{\cosh{Q}/{2}g_n\}.
$$ 

Let $u\in\sH$ be orthogonal to $\{g_n\}$. Then 
$$
0=(u, g_n)=(\cosh^{-1}{Q/2}u, \cosh{Q}/{2}g_n)
$$ and hence, $\cosh^{-1}{Q/2}u\in{R}(\Xi)\cap(\sH\ominus(M_-\oplus{M_+}))$.
By virtue of Lemma \ref{new23},  $\cosh^{-1}{Q/2}u=0$. This means that $u=0$ and 
$\{g_n\}$ is a complete orthonormal sequence in $\sH$, i.e., $\{g_n\}$ is a basis in $\sH$.

It follows from \eqref{agga45}, that $\cosh{Q}/{2}g_n$ belongs to $\sH_+$ or $\sH_-$
(depending on either $f_n\in{\mathfrak L}_+$ or $f_n\in{\mathfrak L}_-$).  The same property
holds true for $g_n$ since the operator $\cosh{Q/2}=\frac{1}{2}(e^{Q/2}+e^{-Q/2})$  commutes with $J$.

$(iii)\to(ii)$.  Since $g_n\in\sH_\pm$, we get $Jg_n=\pm{g_n}=Je^{Q/2}f_n=e^{-Q/2}Jf_n$. 
Therefore $g_n\in{D(e^{Q/2})}=R(e^{-Q/2})$.
This means that the sequence $\{\cosh{Q}/{2}g_n\}$ is well defined and  $f_n\in{D}(e^Q)$. 

For given $Q$ we define the operator of $\cC$-symmetry $\cC=Je^Q$ and  set $G=e^Q$. 
By analogy with \eqref{neww49},
$$
\delta_{nm}=(g_n, g_m)=(e^{Q/2}f_n, e^{Q/2}f_m)=(Gf_n, f_m)=(f_n, f_m)_G.
$$
Therefore, $\{f_n\}$ is an orthonormal sequence in $(\sH_G, (\cdot, \cdot)_G)$. 
Furthermore, the relations above mean that $Gf_n=\gamma_n=sign([f_n,f_n])Jf_n$ and
$\cC{f}_n=\cC_0f_n=sign([f_n,f_n])f_n$. Hence, $\cC$ is an extension of $\cC_0$ and
the dual maximal definite subspaces ${\mathfrak L}_\pm^{max}$ determined by $\cC$
are the extensions of the dual definite subspaces  ${\mathfrak L}_\pm$ generated as the
closures of $\{f_n^{\pm}\}$. This fact and \eqref{agga45} lead to the conclusion that 
 ${\mathfrak L}_\pm$ are determined by \eqref{agga20b}, where 
 $D(T_0)=M_-\oplus{M_+}$ coincides with the closure of  $\mbox{span}\{\cosh{Q}/{2}g_n\}$.
 
 Assume that $\{f_n\}$ is not complete in $(\sH_G, (\cdot, \cdot)_G)$.  Then the direct sum
 of ${\mathfrak L}_\pm$ cannot be dense in  $\sH_G$ and, by Lemma \ref{new23} (since $\Xi=\cosh^{-1}{Q/2}$) there exists
 nonzero $p=\cosh^{-1}{Q/2}u$ such that for all $g_n$
 $$
 0=(p, \cosh{Q}/{2}g_n)=(\cosh^{-1}{Q/2}u, \cosh{Q/2}g_n)=(u, g_n)=0
 $$
 that is impossible (since $\{g_n\}$ is a basis of $\sH$). The obtained contradiction means
 that $\{f_n\}$  is an orthonormal basis of $\sH_G$.
\end{proof}

\begin{corollary}\label{agga35}
Let  $\{f_n\}$ be a quasi basis of  $\sH$.
Then there exists an operator of $\cC$-symmetry $\cC=Je^{Q}\supset\cC_0$
such that  for elements of the energetic linear manifold $g\in\mathfrak{D}[G]\subset\sH$:
\begin{equation}\label{agga81}
 g=\sum_{n=1}^\infty[g, {\cC}f_n]f_n,  \qquad e^{Q/2}g=\sum_{n=1}^\infty[g, {\cC}f_n]e^{Q/2}f_n   
\end{equation}
where the series converge in the Hilbert spaces $(\sH_G, (\cdot, \cdot)_G)$ and $(\sH, (\cdot, \cdot))$,
respectively.
\end{corollary}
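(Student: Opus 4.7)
The plan is to invoke Theorem \ref{agga38} to fix the extension $\cC = Je^Q \supset \cC_0$ and the associated operator $G = e^Q$, noting that Theorem \ref{agga38}(ii) gives that $\{f_n\}$ is an orthonormal basis in $(\sH_G, (\cdot,\cdot)_G)$, while (iii) gives that $\{g_n = e^{Q/2}f_n\}$ is an orthonormal basis in $(\sH, (\cdot,\cdot))$. The two expansions in \eqref{agga81} will then be nothing but the Parseval expansions in these two bases, once the Fourier coefficients are correctly identified.

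The key computation is the identification of the Fourier coefficient with $[g, \cC f_n]$. Since $\mathfrak{D}[G] = D(\sqrt{G}) = D(e^{Q/2})$ and each $f_n \in D(\cC) = D(G)$, for $g \in \mathfrak{D}[G]$ I would compute
\begin{equation*}
[g,\cC f_n] \;=\; (Jg, JGf_n) \;=\; (g, G f_n) \;=\; (\sqrt{G}g, \sqrt{G}f_n) \;=\; (g, f_n)_G,
\end{equation*}
using self-adjointness of $\sqrt{G}$ together with $G = J\cC$. The same chain also gives
\begin{equation*}
[g,\cC f_n] \;=\; (e^{Q/2}g, e^{Q/2}f_n) \;=\; (e^{Q/2}g, g_n).
\end{equation*}

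With the coefficients identified, the first identity follows from the Parseval expansion of $g\in\mathfrak{D}[G]\subset\sH_G$ in the orthonormal basis $\{f_n\}$ of $\sH_G$, converging in the norm $\|\cdot\|_G$. The second identity follows from the Parseval expansion of $e^{Q/2}g\in\sH$ in the orthonormal basis $\{g_n\}=\{e^{Q/2}f_n\}$ of $\sH$, converging in $\|\cdot\|$.

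The only delicate point I expect is checking that the formal manipulations are justified at the level of domains — specifically, that $\mathfrak{D}[G]=D(\sqrt G)=D(e^{Q/2})$ so that $e^{Q/2}g$ is legitimately an element of $\sH$, and that self-adjointness of $\sqrt G$ can be applied in $(g, Gf_n)=(\sqrt Gg,\sqrt Gf_n)$ because $f_n\in D(G)\subset D(\sqrt G)$. Once these domain issues are settled the conclusion is immediate, so the corollary is essentially a direct translation of Theorem \ref{agga38} through the identity $(\cdot,\cdot)_G=[\cC\,\cdot,\cdot]$ valid on $D(G)$ (and extended to $\mathfrak{D}[G]$ by the energetic form).
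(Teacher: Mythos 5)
Your proposal is correct and follows essentially the same route as the paper: both fix $\cC=Je^Q$ via Theorem \ref{agga38}, expand $g$ in the orthonormal basis $\{f_n\}$ of $\sH_G$ and $e^{Q/2}g$ in the orthonormal basis $\{e^{Q/2}f_n\}$ of $\sH$, and identify both Fourier coefficients with $[g,\cC f_n]$ through the chain $(g,f_n)_G=(e^{Q/2}g,e^{Q/2}f_n)=(g,e^Qf_n)=[g,\cC f_n]$, with the domain condition $g\in\mathfrak{D}[G]=D(e^{Q/2})=\sH\cap\sH_G$ justifying the manipulations exactly as you note.
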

\begin{proof} 
The energetic linear manifold $\mathfrak{D}[G]$
coincides with the common part of $\sH$ and $\sH_G$ (see Section \ref{ref1}). 
 Hence, each $g\in\mathfrak{D}[G]$
can be presented as $g=\sum_{n=1}^\infty{c_n}f_n$,  where the series converges 
 $\mathfrak{H}_{G}$ and $c_n=(g, f_n)_G=(e^{Q/2}g, e^{Q/2}f_n)=(g, e^{Q}f_n)=[g, {\cC}f_n].$ 
 Similarly, each $e^{Q/2}g$ admits the decomposition $e^{Q/2}g=\sum_{n=1}^\infty{c_n'}e^{Q/2}f_n$,
 where the series converges in $\sH$ and $c_n'=(e^{Q/2}g, e^{Q/2}f_n)=[g, {\cC}f_n].$
 \end{proof}

\begin{corollary}
Let $\{f_n\}$ be a quasi basis of $\sH$. 
Then an operator  of $\cC$-symmetry $\cC$ appearing in items $(ii), (iii)$ of Theorem \ref{agga38} acts 
as follows: 
\begin{equation}\label{new8b}
\cC{f}=\sum_{n=1}^\infty[f, f_n]{f_n}, \qquad \cC{f}=\sum_{n=1}^\infty[e^{Q/2}f, f_n]{e^{Q/2}f_n}, \quad  \forall{f}\in{D}(\cC),
\end{equation}
where the series are convergent in $\sH_G$ and $\sH$, respectively.
 \end{corollary}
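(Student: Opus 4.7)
The plan is to apply the two orthonormal-basis expansions provided by Theorem \ref{agga38}---namely that $\{f_n\}$ is an orthonormal basis of $(\sH_G,(\cdot,\cdot)_G)$ and that $\{g_n=e^{Q/2}f_n\}$ is an orthonormal basis of $(\sH,(\cdot,\cdot))$---to the vector $\cC f$ itself, and then to simplify the resulting Fourier coefficients using the algebraic identities that relate $\cC$, $G$, $J$, and $Q$. Throughout, $f\in D(\cC)=D(G)$; writing $f=f_++f_-$ with $f_\pm\in{\mathfrak L}_\pm^{max}\subset D(G)$ shows that $\cC f=f_+-f_-$ already belongs to $D(G)\subset \sH_G\cap\sH$, so both expansions are meaningful.

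For the first formula I would write
$\cC f=\sum_n(\cC f,f_n)_G\,f_n$ with convergence in $\sH_G$
and then apply the identity $(\varphi,\psi)_G=(G\varphi,\psi)=(J\cC\varphi,\psi)=[\cC\varphi,\psi]$, valid for $\varphi\in D(G)$ and already invoked in Corollary \ref{agga35}. Combined with $\cC^2=I$ on $D(\cC)$, this yields $(\cC f,f_n)_G=[\cC^2 f,f_n]=[f,f_n]$, which gives the first series representation.

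For the second formula I would expand $\cC f\in\sH$ in the basis $\{g_n\}$, obtaining $\cC f=\sum_n(\cC f,g_n)g_n$ in $\sH$. Since $\cC f\in D(G)\subset D(e^{Q/2})$ and $e^{Q/2}$ is self-adjoint, the coefficient equals $(e^{Q/2}\cC f,f_n)$. The substantive step is then the commutation relation $e^{Q/2}\cC=Je^{Q/2}$ on $D(e^Q)$: the anticommutation $JQ=-QJ$ yields $J\varphi(Q)=\varphi(-Q)J$ for every Borel function $\varphi$, hence $Je^Q=e^{-Q}J$ and $e^{Q/2}J=Je^{-Q/2}$, and chaining these gives $e^{Q/2}\cC f=e^{Q/2}Je^Q f=Je^{-Q/2}e^Q f=Je^{Q/2}f$. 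The coefficient therefore simplifies to $(Je^{Q/2}f,f_n)=[e^{Q/2}f,f_n]$, yielding the second formula.

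The only delicate point I anticipate is the justification of the domain transfers in the commutation chain---for example that $Je^Q f\in D(e^{Q/2})$ and that $Jf\in D(e^{-Q/2})$ whenever $f\in D(e^Q)$. Both follow from the spectral-theoretic incarnation of $JQJ=-Q$, namely $JE_\lambda J=I-E_{-\lambda-0}$ for the spectral resolution of $Q$, together with the elementary inclusion $D(e^Q)\subset D(e^{Q/2})$. Alternatively, one can verify $e^{Q/2}\cC=Je^{Q/2}$ first on the core ${\mathfrak L}_+[\dot{+}]{\mathfrak L}_-=D(G_0)$---where all exponentials of $Q$ act through finite power-series-like manipulations via the Cayley representation of $T_0$---and then extend by closedness, bypassing any explicit spectral computation.
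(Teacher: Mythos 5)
Your argument is correct and is essentially the paper's: the paper obtains \eqref{new8b} by substituting $g=\cC f$ and $g=e^{-Q/2}\cC f$ into the expansions \eqref{agga81} of Corollary \ref{agga35}, and your direct computation of the Fourier coefficients in the bases $\{f_n\}$ of $\sH_G$ and $\{e^{Q/2}f_n\}$ of $\sH$, via $(\cdot,\cdot)_G=[\cC\,\cdot,\cdot]$, $\cC^2=I$ on $D(\cC)$, and $e^{Q/2}J=Je^{-Q/2}$, just inlines the proof of that corollary. Your explicit attention to the domain transfers (e.g.\ $\cC f\in D(e^{Q/2})$ because $D(e^Q)\subset D(e^{Q/2})$) is a welcome extra level of care that the paper leaves implicit.
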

\begin{proof}
The  relations in \eqref{new8b} follow from the corresponding formulas in \eqref{agga81} where
$g=\cC{f}$ and $g=e^{-Q/2}\cC{f}$, respectively.  
\end{proof}

An operator $H$ in a Krein space $(\sH, [\cdot, \cdot])$ is called \emph{$J$-symmetric} if
$[Hf, g]=[f, Hg]$ for all $f,g\in{D}(H)$.

\begin{corollary}\label{agga72}
If eigenfunctions $\{f_n\}$ of a $J$-symmetric operator $H$ form a quasi basis in $\sH$,  then
there exists an operator of $\cC$-symmetry $\cC$ such that the operator
 $H$ restricted on $\mbox{span}\{f_n\}$ turns out to be essentially self-adjoint in the Hilbert space $(\sH_G, (\cdot,\cdot)_{G})$
 generated by $\cC$. 
\end{corollary}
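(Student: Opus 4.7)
The plan is to leverage Theorem \ref{agga38} to transfer the problem into a Hilbert space where $\{f_n\}$ is a genuine orthonormal basis, and then exploit the fact that $H$ acts diagonally on this basis with real eigenvalues.

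First, since $\{f_n\}$ is a quasi basis, Theorem \ref{agga38}(ii) provides an operator of $\cC$-symmetry $\cC\supset\cC_0$ and a Hilbert space $(\sH_G, (\cdot,\cdot)_G)$ (with $G=J\cC$) in which $\{f_n\}$ is an orthonormal basis, i.e., $(f_n, f_m)_G=\delta_{nm}$. Moreover, by Lemma \ref{agga20} and the construction in Section \ref{ref1}, the indefinite inner product $[\cdot,\cdot]_G$ on $\sH_G$ extends the original $[\cdot,\cdot]$ on $\mathfrak{D}[G]$, and the sequence $\{f_n\}$ is $J$-orthonormal with respect to both.

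Next, I would verify that the eigenvalues $\lambda_n$ (where $Hf_n=\lambda_n f_n$) are real. Since $H$ is $J$-symmetric and $[f_n, f_n]=\pm 1 \neq 0$, the standard argument
\[
\lambda_n[f_n,f_n]=[Hf_n,f_n]=[f_n,Hf_n]=\overline{\lambda_n}[f_n,f_n]
\]
forces $\lambda_n\in\mathbb{R}$. Now let $H_0$ denote the restriction of $H$ to $\mathcal{D}_0:=\mbox{span}\{f_n\}$. The key symmetry computation in $(\sH_G, (\cdot,\cdot)_G)$ is immediate on the basis: for finite linear combinations $f=\sum a_n f_n$ and $g=\sum b_n f_n$,
\[
(H_0 f, g)_G=\sum_n \lambda_n a_n\overline{b_n}=(f, H_0 g)_G,
\]
so $H_0$ is symmetric in $(\sH_G, (\cdot,\cdot)_G)$.

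Finally, essential self-adjointness of $H_0$ follows from the standard deficiency-index criterion: the ranges $R(H_0\pm iI)$ contain every $f_n$ (because $(H_0\pm iI)\bigl((\lambda_n\pm i)^{-1}f_n\bigr)=f_n$), so they contain the dense set $\mathcal{D}_0$ and hence are dense in $\sH_G$. Equivalently, the closure $\overline{H_0}$ is the multiplication operator associated with the real spectral sequence $\{\lambda_n\}$ on the orthonormal basis $\{f_n\}$, which is manifestly self-adjoint. I do not anticipate a serious obstacle; the only delicate point is confirming that the restriction of $H$ to $\mathcal{D}_0$ really does land in $\mathcal{D}_0$ (which is automatic since each $f_n$ is an eigenvector), so that $H_0$ is well defined on the dense domain $\mathcal{D}_0\subset\sH_G$ before invoking the diagonalization argument.
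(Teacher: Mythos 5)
Your proposal is correct and follows essentially the same route as the paper: invoke Theorem \ref{agga38} to realize $\{f_n\}$ as an orthonormal basis of $(\sH_G,(\cdot,\cdot)_G)$, check symmetry of $H$ on $\mbox{span}\{f_n\}$, and conclude essential self-adjointness from the density of $R(H\pm iI)$. The only cosmetic difference is in the symmetry step: you first extract realness of the $\lambda_n$ from $J$-symmetry and then compute diagonally in the basis, whereas the paper verifies $(Hf,g)_G=(f,Hg)_G$ directly via the commutation $\cC H=H\cC$ on the span together with the $J$-symmetry of $H$ and $\cC$ — the two computations are interchangeable.
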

\begin{proof} Due to Theorem \ref{agga38} there exists an operator $\cC$ such that $\{f_n\}$ is a basis of $(\sH_G, (\cdot,\cdot)_{G})$. 
 The restriction of $\cC$ on $\mbox{span}\{f_n\}$ coincides with the operator $\cC_0$ defined by \eqref{new5} (here, of course,
 ${\mathfrak L}_\pm$ are the closures of $\{f_n^{\pm}\}$).  It is easy to see that
 $$
 \cC{H}f=\cC_0{H}f=H\cC_0{f}=H\cC{f},  \qquad f\in\mbox{span}\{f_n\}.
 $$
 Taking \eqref{agga15},  \eqref{fff7} into account,  we obtain 
 $$
 (Hf, g)_G=(GHf, g)=[\cC{H}f, g]=[f, \cC{H}g]=(f, GHg)=(f, Hg)_G   
 $$
 for all $f, g\in\mbox{span}\{f_n\}$.
 Hence $H$ is symmetric in $(\sH_G, (\cdot,\cdot)_{G})$. Since $R(H\pm{i}I)\supset\mbox{span}\{f_n\}$,  the operator
 $H$ is essentially self-adjoint in $\sH_G$.
 \end{proof}

\subsection{Examples.}   Examples of quasi-bases can be easy constructed with the use 
of Theorem \ref{agga38}.   Indeed let us assume that  $g_n$ be an orthonormal basis of $(\sH, (\cdot,\cdot))$
such that each $g_n$ belongs to one of the subspaces $\sH_\pm$ of the fundamental decomposition \eqref{AK10}.
Let $Q$ be a self-adjoint operator in $\sH$, which anticommutes with $J$. If all $g_n$ belong to the domain of definition of
$e^{-Q/2}$  then $f_n=e^{-Q/2}g_n$ is an $J$-orthonormal system of the Krein space $(\sH, [\cdot,\cdot])$.
Assuming additionally that $\{f_n\}$ is complete in $\sH$, we get  an example of quasi basis.

\vspace{3mm}

{\bf I.}
Let $\sH=L_2(\mathbb{R})$ and let $J=\mathcal{P}$ be the space parity operator $\mathcal{P}f(x)=f(-x)$.
The subspaces $\sH_{\pm}$ of the fundamental decomposition \eqref{AK10}  coincide with the subspaces of even and odd
functions of  $L_2(\mathbb{R})$. 

The Hermite functions
$$
g_n(x)=\frac{1}{\sqrt{2^nn!\sqrt{\pi}}}H_n(x)e^{-x^2/2},  \quad H_n(x)=e^{x^2/2}(x-\frac{d}{dx})^ne^{-x^2/2}
$$
is an example of orthonormal basis  of $L_2(\mathbb{R})$. The functions $g_n$ are either odd or even functions. 
Therefore, $g_n\in\sH_+$ or $g_n\in\sH_-$. 

Since Hermitian functions are entire functions,  the complex shift of $g_n$ can be defined:
$$
 f_n(x)=g_n(x+ia),  \qquad  a\in\mathbb{R}\setminus\{0\}, \quad n=0,1,2,\ldots
$$ 
The sequence $\{f_n\}$ is complete in $L_2(\mathbb{R})$ \cite[Lemma 2.5]{Mit}.
Applying the Fourier transform
$Ff=\frac{1}{\sqrt{2\pi}}\int_{-\infty}^\infty{e^{-ix\xi}}f(x)dx$
to $f_n$ we get $Ff_n=e^{-a\xi}Fg_n$. Therefore,  $f_n=F^{-1}e^{-a\xi}Fg_n$.
The last relation can be rewritten as
$$
f_n=e^{-Q/2}g_n,   \qquad   Q=-2ai\frac{d}{dx}. 
$$   
This means that  $\{f_n\}$ is a quasi basis of $L_2(\mathbb{R})$. 
The functions  $\{f_n\}$ are simple eigenfunctions of the $\mathcal{P}$-symmetric
operator 
$$
H=-\frac{d^2}{dx^2}+x^2+2iax, \qquad Hf_n=(1+2n+a^2)f_n.
$$
Therefore, $H$ restricted on $\mbox{span}\{f_n\}$ is essentially self-adjoint 
in the new Hilbert space $(\sH_G, (\cdot,\cdot)_G)$, where  $\sH_G$ is the 
completion of $\mbox{span}\{f_n\}$ with respect to the norm:
$\|f\|^2_G=(e^Qf,f)=(F^{-1}e^{2a\xi}Ff, f)$.

{\bf II.} Let $\{g_n\}$ be orthonormal basis in $L_2(\mathbb{R})$
which consists of the eigenfunctions  of the anharmonic oscillator 
$$
H_0=-\frac{d^2}{dx^2} + |x|^\beta, \qquad  \beta>2
$$ 
The eigenfunctions $g_n$ are either even or odd functions. 

Consider the sequence $f_n(x)=e^{p(x)}g_n(x)$, 
where $p\in{C^2}(\mathbb{R})$ is a real valued odd function such that
$$
|p^{k}(x)|\leq{C}(1+x^2)^{\frac{\alpha-k}{2}}, \quad k=0,1,2, \quad \alpha<\beta/2+1.
$$
The sequence $\{f_n\}$ is complete in $L_2(\mathbb{R})$  \cite[Lemma 3.6]{Mit} and
$f_n$ are simple eigenfunctions of the $\mathcal{P}$-symmetric operator
$$
H=H_0+p''(x)- (p'(x))^2+2ip'(x)\frac{d}{dx}.
$$
The sequence $\{f_n\}$ is a quasi basis in $L_2(\mathbb{R})$
(since $f_n=e^{-Q/2}g_n$ with $Q=-2p(x)I$) and 
$H$ restricted on $\mbox{span}\{f_n\}$ is essentially self-adjoint 
in the new Hilbert space $(\sH_G, (\cdot,\cdot)_G)$, where  $\sH_G$ is the 
completion of $\mbox{span}\{f_n\}$ with respect to the norm:
$\|f\|^2_G=(e^Qf,f)=(e^{-2p(x)}f, f)$.

\vspace{3mm}

{\bf III.} \emph{An example of a complete $J$-orthonormal sequence $\{f_n\}$ which cannot be a quasi-basis.}
The dual definite subspaces ${\mathfrak L}_+^{max}$  and  $\sL_-$ considered in Sect. 
\ref{sec4.2.2}  cannot be dual quasi maximal for  $1<\delta\leq\frac{3}{2}$. Therefore,  
each $J$-orthonormal sequence $\{f_n\}$ such that the closure of its positive/negative elements coincide 
with ${\mathfrak L}_+^{max}$ and $\sL_-$,  respectively cannot be quasi basis.

 \end{document}